\DeclareMathOperator*{\argmin}{\arg\!\min}
\theoremstyle{thmstyleone}%
\newtheorem{theorem}{Theorem}[section]
\newtheorem{lemma}[theorem]{Lemma}
\begin{document}

\title{Approximation of the Proximal Operator of the $\ell_\infty$ Norm Using a Neural Network}

\author*[1]{\fnm{Kathryn} \sur{Linehan}}\email{kjl5t@virginia.edu}

\author[2]{\fnm{Radu} \sur{Balan}}\email{rvbalan@umd.edu}

\affil*[1]{\orgdiv{Research Computing}, \orgname{University of Virginia}, \orgaddress{\city{Charlottesville}, \postcode{22904}, \state{VA}, \country{USA}}}

\affil[2]{\orgdiv{Department of Mathematics}, \orgname{University of Maryland}, \orgaddress{\city{College Park}, \postcode{20742}, \state{MD}, \country{USA}}}

\abstract{Computing the proximal operator of the $\ell_\infty$ norm, $\textbf{prox}_{\alpha ||\cdot||_\infty}(\mathbf{x})$, generally requires a sort of the input data, or at least a partial sort similar to quicksort. In order to avoid using a sort, we present an $O(m)$ approximation of $\textbf{prox}_{\alpha ||\cdot||_\infty}(\mathbf{x})$ using a neural network.  A novel aspect of the network is that it is able to accept vectors of varying lengths due to a feature selection process that uses moments of the input data.  We present results on the accuracy of the approximation, feature importance, and computational efficiency of the approach.  We show that the network outperforms a ``vanilla neural network'' that does not use feature selection.  We also present an algorithm with corresponding theory to calculate $\textbf{prox}_{\alpha ||\cdot||_\infty}(\mathbf{x})$ exactly, relate it to the Moreau decomposition, and compare its computational efficiency to that of the approximation.}    

\keywords{Proximal operator, Neural network, Feature selection}

\maketitle

\section{Introduction}

Proximal operators frequently serve as building blocks for solving large optimization problems.  For example, proximal operators commonly arise in ADMM, the alternating direction method of multipliers (see e.g., \cite{parikh_boyd_2014, polson_etal_2015}), which can be used to solve problems such as lasso and group lasso \cite{boyd_etal_2011}, robust principal component analysis \cite{candes_etal_2011}, CUR matrix decomposition \cite{li_etal_2019}, and robust CUR decomposition \cite{cai_etal_2021}, which is an intersection between the prior two applications\footnote{The applications of ADMM are vast.  See \cite{boyd_etal_2011} and \cite{boyd_website} for more examples.}.   In fact, our interest in the proximal operator stems from studying convex optimization formulations for CUR matrix decomposition, e.g., \cite{bien_xu_mahoney_2010, li_etal_2019, mairal_etal_2011}.  CUR decomposes a matrix exactly, $\mathbf{X=CUR}$, or approximately, $\mathbf{X \approx CUR}$, such that $\mathbf{C}$ contains a subset of the columns of $\mathbf{X}$ and $\mathbf{R}$ contains a subset of the rows of $\mathbf{X}$; see \cite{hamm_huang1_2020} for conditions that guarantee an exact decomposition.  While $\mathbf{C}$ and $\mathbf{R}$ can contain any subsets of columns and rows, respectively, a CUR decomposition stemming from convex optimization selects columns and rows based on the solution(s) to an optimization problem(s).  

In this paper we focus on the proximal operator of the $\ell_\infty$ norm, $\textbf{prox}_{\alpha ||\cdot||_\infty}: \mathbb{R}^m \rightarrow \mathbb{R}^m$, defined as  
\begin{equation}
\label{eqn:proxop}
\textbf{prox}_{\alpha ||\cdot||_\infty}(\mathbf{x}) = \argmin_{\mathbf{y} \in \mathbb{R}^m} \left(\frac{1}{2}||\mathbf{y-x}||_2^2 + \alpha ||\mathbf{y}||_\infty \right)
\end{equation}
for $\mathbf{x} \in \mathbb{R}^m$ and $\alpha \geq 0$. While closed form solutions exist for some proximal operators, e.g., for the $\ell_1$ and $\ell_2$ norms, this is not true for the $\ell_\infty$ norm.  However, algorithms exist for its computation that use the Moreau decomposition and are $O(m \log m)$ time and expected $O(m)$ time (see e.g., \cite{condat_2016}, and Section \ref{sec:rel_work} for details).  In this work, we discuss theoretical properties of $\textbf{prox}_{\alpha ||\cdot||_\infty}(\mathbf{x})$, which lead to an $O(m \log m)$ algorithm for its exact computation.  We relate this approach to that which uses the Moreau decomposition to compute $\textbf{prox}_{\alpha ||\cdot||_\infty}(\mathbf{x})$.  Computing $\textbf{prox}_{\alpha ||\cdot||_\infty}(\mathbf{x})$ generally requires a sort of the input data, or at least a partial sort similar to quicksort. We present a neural network approach to approximate $\textbf{prox}_{\alpha ||\cdot||_\infty}(\mathbf{x})$ which does not require a sort and is computationally efficient with $O(m)$ complexity.  A novel piece of this work is that we use a preprocessing and feature selection process that allows the network to accept vectors of varying lengths.  This is a desired property since 1) $\textbf{prox}_{\alpha ||\cdot||_\infty}(\mathbf{x})$ can be computed for a vector of any length, and 2) this eliminates the need for multiple networks, i.e., one for each unique vector length in the input data. The data preprocessing for the network is inspired by the exact approach developed to compute $\textbf{prox}_{\alpha ||\cdot||_\infty}(\mathbf{x})$.  In addition, we compare the performance of the network with that of a ``vanilla neural network" that does not use a feature selection process.

The remainder of this paper is organized as follows. Related work is presented in Section \ref{sec:rel_work}. The theoretical properties, algorithm to exactly compute $\textbf{prox}_{\alpha ||\cdot||_\infty}(\mathbf{x})$, and  the relationship between this algorithm and the Moreau decomposition are presented in Section \ref{sec:alg}. The neural network approximation of $\textbf{prox}_{\alpha ||\cdot||_\infty}(\mathbf{x})$ and results on the approximation accuracy, feature importance, and computational efficiency including a comparison with the exact algorithm are presented in Section \ref{sec:nn}.  Section \ref{sec:nn} also includes a comparison of the network performance with that of a vanilla network.  The paper concludes with Section \ref{sec:conc}.  Throughout this work vectors are denoted by bold, lowercase letters, the vector $(|\mathbf{v}_1|, |\mathbf{v}_2|, \ldots, |\mathbf{v}_m| )$ is denoted as $|\mathbf{v}|$, and the set $\{1,2,\ldots,m\}$ is denoted as $[m]$.  The notation $\mathbf{x} = \mathbf{x} + \mu$ denotes element-wise addition, i.e., $\mathbf{x}_i = \mathbf{x}_i + \mu$ for $i \in [m]$.

\section{Related Work}
\label{sec:rel_work}

A thorough treatment of proximal operators including properties, examples, applications, and history is given in \cite{parikh_boyd_2014}.  In this section we will focus on work directly related to computing $\textbf{prox}_{\alpha ||\cdot||_\infty}(\mathbf{x})$ rather than proximal operators more broadly.  Typically, for $\alpha>0$, $\textbf{prox}_{\alpha ||\cdot||_\infty}(\mathbf{x})$  is calculated using the Moreau decomposition: 
\[
\textbf{prox}_{\alpha ||\cdot||_\infty}(\mathbf{x}) = \mathbf{x} - \alpha \mathcal{P}_{||\cdot||_1 \leq 1} (\mathbf{x}/\alpha), 
\]
where $\mathcal{P}_{||\cdot||_1 \leq 1}: \mathbb{R}^m \rightarrow \mathbb{R}^m$ is projection onto the $\ell_1$ ball, i.e., 
\[
\mathcal{P}_{||\cdot||_1 \leq 1}(\mathbf{v}) = \argmin_{||\mathbf{y}||_1 \leq 1} \frac{1}{2} ||\mathbf{y}-\mathbf{v}||^2_2.
\] 
Projection onto the $\ell_1$ ball can be easily calculated from projection onto the simplex \cite{duchi_etal_2008}.  Projection onto the simplex can be calculated (naively) using an $O(m\log m)$ algorithm originally published in \cite{held_etal_1974} and rediscovered in \cite{shalev-shwartz_singer_2006}.  The algorithm performs soft-thresholding on the input vector using an iteratively determined threshold.  Naively implemented, the iterative algorithm to determine the threshold requires a sort of the entries of the input vector; however, in \cite{duchi_etal_2008} an expected linear time, divide and conquer algorithm is presented which leverages the fact that a full sort of the input vector is not actually necessary to find the threshold.  In addition, other fast algorithms for projection onto the simplex exist; see e.g., \cite{condat_2016, perez_etal_2020}. A review of algorithms for projection onto the simplex is also given in \cite{condat_2016}. 

In the next section we present an approach for calculating $\textbf{prox}_{\alpha ||\cdot||_\infty}(\mathbf{x})$ that does not make use of the Moreau decomposition, but does use a strategy of iteratively computing a threshold similar to the projection onto the simplex algorithms presented in \cite{duchi_etal_2008, held_etal_1974, shalev-shwartz_singer_2006} and Algorithms 1 and 3 in \cite{condat_2016}.

\section{Computing $\textbf{prox}_{\alpha ||\cdot||_\infty}$}
\label{sec:alg}
We first present the intuition for the computation of $\textbf{prox}_{\alpha ||\cdot||_\infty}(\mathbf{x})$.  Then we present the corresponding theory and algorithm.  To compute $\textbf{prox}_{\alpha ||\cdot||_\infty}(\mathbf{x})$ from Equation \ref{eqn:proxop}, let $t = ||\mathbf{y}||_\infty$ for $\mathbf{y} \in \mathbb{R}^m$.  For a fixed $t$, we can uniquely determine the corresponding vector $\mathbf{y}$ that solves Equation \ref{eqn:proxop}, i.e., $\mathbf{y}_k = \sigma_t(\mathbf{x}_k)$, where $\sigma_{t}: \mathbb{R} \rightarrow \mathbb{R}$ is defined as
        \begin{equation}
        \label{eqn:prox_k}
         \sigma_{t}(\mathbf{x}_k) =
            \begin{cases}
                t, & \text{if}\ \mathbf{x}_k \geq t \\
                \mathbf{x}_k, & \text{if}\ |\mathbf{x}_k| < t \\
                -t, & \text{if}\ \mathbf{x}_k \leq -t.
            \end{cases}
        \end{equation} 

\noindent Hence, to calculate $\textbf{prox}_{\alpha ||\cdot||_\infty}(\mathbf{x})$, we solve 
\begin{equation}
\label{eqn:tau}
\tau = \argmin_{t \geq 0} \left( \frac{1}{2} \sum_{k=1}^{m} (\sigma_{t}(\mathbf{x}_k)-\mathbf{x}_k)^2 + \alpha t \right)
\end{equation}
and then let $[\textbf{prox}_{\alpha ||\cdot||_\infty}(\mathbf{x})]_k = \sigma_{\tau}(\mathbf{x}_k)$ for each $k \in [m]$.  Note that $\tau = || \textbf{prox}_{\alpha ||\cdot||_\infty}(\mathbf{x}) ||_\infty$.  There is not a closed form solution to Equation \ref{eqn:tau}; thus, in the remainder of this section, we analyze and provide an algorithm to solve Equation \ref{eqn:tau}.

\begin{restatable}{theorem}{psiThm}
    \label{thm:psi}
    For any $\alpha \in \mathbb{R}$ and $\mathbf{x} \in \mathbb{R}^m$, let $I_t = \{k \:| \: |\mathbf{x}_k| \geq t\}$, $\psi: \mathbb{R}^+ \cup \{0\} \rightarrow \mathbb{R}$ be defined as
\[
\psi(t) = \frac{1}{2} \sum_{k=1}^{m} (\sigma_t(\mathbf{x}_k)-\mathbf{x}_k)^2 + \alpha t = \frac{1}{2}\sum_{k \in I_t} (|\mathbf{x}_k| - t)^2 + \alpha t,
\]
and $\mathbf{s}$ be a permutation of $|\mathbf{x}|$ such that $\mathbf{s}_1 \geq \mathbf{s}_2 \geq \cdots \geq \mathbf{s}_m \geq 0$ and $\mathbf{s}_{m+1} = 0.$  Then, $\psi$ is $\mathcal{C}^1$ over $t > 0$ and strictly convex over $0 \leq t \leq ||\mathbf{x}||_\infty = \mathbf{s}_1$.  In addition, if $\tau = \argmin_{t \geq 0} \psi(t)$, then $\tau \leq ||\mathbf{x}||_\infty = \mathbf{s}_1$, and $\tau = 0$ if and only if $|| \mathbf{x} ||_1 \leq \alpha$.  
\end{restatable}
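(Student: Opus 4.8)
The plan is to analyze $\psi$ piecewise on the intervals determined by the sorted values $\mathbf{s}_j$, since the index set $I_t$ is constant on each interval $(\mathbf{s}_{j+1}, \mathbf{s}_j)$. On such an interval, $\psi(t) = \frac{1}{2}\sum_{k \in I_t}(|\mathbf{x}_k| - t)^2 + \alpha t$ is a quadratic in $t$ with positive leading coefficient $|I_t|/2$, so it is smooth and strictly convex there; the work is to check that the pieces glue together into a $\mathcal{C}^1$ function and that strict convexity is preserved across the breakpoints $\mathbf{s}_j$ for $0 < t \le \mathbf{s}_1$.

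First I would compute the one-sided derivatives of $\psi$ at a breakpoint $t = \mathbf{s}_j$ (with $\mathbf{s}_j > 0$). On each open interval the derivative is $\psi'(t) = -\sum_{k \in I_t}(|\mathbf{x}_k| - t) + \alpha = -\big(\sum_{k \in I_t}|\mathbf{x}_k|\big) + |I_t|\,t + \alpha$. Approaching $\mathbf{s}_j$ from the left and from the right, the terms $|\mathbf{x}_k| - t$ for those $k$ with $|\mathbf{x}_k| = \mathbf{s}_j$ vanish in the limit, so both one-sided derivatives agree; this gives $\psi \in \mathcal{C}^1$ on $t > 0$. (One should note the mild subtlety that $\psi$ need not be differentiable at $t = 0$ if $\mathbf{s}_1 = \mathbf{s}_2 = \cdots$, but the statement only claims $\mathcal{C}^1$ over $t > 0$ and continuity — hence convexity — on the closed interval $[0, \mathbf{s}_1]$.) Strict convexity on $[0,\mathbf{s}_1]$ then follows because $\psi$ is continuous there, piecewise a nondegenerate upward parabola on each subinterval (note $|I_t| \ge 1$ for $t \le \mathbf{s}_1$), and $\mathcal{C}^1$ at the interior breakpoints, so $\psi'$ is continuous and strictly increasing on $(0, \mathbf{s}_1)$; a $\mathcal{C}^1$ function with strictly increasing derivative is strictly convex.

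Next I would handle the location of the minimizer. For $t \ge \mathbf{s}_1 = \|\mathbf{x}\|_\infty$ we have $I_t \subseteq \{k : |\mathbf{x}_k| = \mathbf{s}_1\}$ (empty for $t > \mathbf{s}_1$), so on $t > \mathbf{s}_1$, $\psi(t) = \alpha t$ is nondecreasing (as $\alpha \ge 0$ — and here one uses the hypothesis sign; for $\alpha < 0$ the "argmin over $t \ge 0$" statements need $\alpha \ge 0$ implicitly, which I'd flag or assume consistent with the paper's setting). Hence no minimizer lies strictly beyond $\mathbf{s}_1$, giving $\tau \le \mathbf{s}_1$. For the characterization $\tau = 0 \iff \|\mathbf{x}\|_1 \le \alpha$: since $\psi$ is convex on $[0,\mathbf{s}_1]$ and this interval contains the minimizer, $\tau = 0$ iff the right derivative of $\psi$ at $0$ is nonnegative. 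On the first subinterval $(0, \mathbf{s}_m)$ — or more carefully, as $t \to 0^+$ — we have $I_t = [m]$ (all indices, since every $|\mathbf{x}_k| \ge 0 \ge$ small $t$, modulo zero entries which contribute nothing), so $\psi'(0^+) = -\sum_{k=1}^m |\mathbf{x}_k| + \alpha = \alpha - \|\mathbf{x}\|_1$. Thus $\psi'(0^+) \ge 0 \iff \|\mathbf{x}\|_1 \le \alpha$, which by convexity is exactly the condition $\tau = 0$; and when $\|\mathbf{x}\|_1 > \alpha$ the right derivative is negative so $\psi$ strictly decreases initially, forcing $\tau > 0$.

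The main obstacle is bookkeeping at the breakpoints and the boundary $t = 0$: one must be careful about ties among the $\mathbf{s}_j$ (so that $I_t$ jumps by more than one index), about whether $|I_t|$ is defined using $\ge t$ or $> t$ at the breakpoint itself (which affects the value but not the one-sided limits), and about zero entries of $\mathbf{x}$, which sit in $I_0$ but contribute $0$ to both $\psi$ and $\psi'$. Once the piecewise-quadratic structure and the $\mathcal{C}^1$ gluing are established cleanly, the convexity and the two claims about $\tau$ are short consequences of elementary one-variable convex analysis.
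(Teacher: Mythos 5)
Your proposal is correct and follows essentially the same route as the paper's proof: piecewise-quadratic structure of $\psi$ on the intervals cut out by the sorted magnitudes $\mathbf{s}_j$, matching of one-sided derivatives at the breakpoints for $\mathcal{C}^1$, strict monotonicity of $\psi'$ for strict convexity, and the signs of $\psi'$ near $t=0^+$ and for $t \geq \mathbf{s}_1$ to locate $\tau$ and characterize $\tau = 0$. Your flag that the statement implicitly requires $\alpha \geq 0$ (the paper's own argument for $\tau \leq \mathbf{s}_1$ only treats $\alpha > 0$ and $\alpha = 0$) is a fair catch, and your direct first-order-condition argument for $\tau = 0 \iff \|\mathbf{x}\|_1 \leq \alpha$ is a slightly cleaner packaging of the paper's contradiction-plus-intermediate-value-theorem argument.
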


\begin{proof}
    See Appendix \ref{app:psi_pf}.
\end{proof}  

Theorem \ref{thm:psi} provides the basis for Algorithm \ref{alg:base_min} which computes $\textbf{prox}_{\alpha ||\cdot||_\infty}(\mathbf{x})$.  The main iteration of this algorithm finds the value $0 < t \leq \mathbf{s}_1$ such that $\psi'(t)=0$, i.e., $t = (\sum_{k \in I_{t}}|\mathbf{x}_k| - \alpha)/|I_{t}|$.  Due to the strict convexity of $\psi$, there is at most one value of $t$ for which this is true.  Clearly the algorithm is necessary since we do not know the elements of $I_{\tau}$ in advance. The complexity of Algorithm \ref{alg:base_min} is $O(m\log m)$ due to the required sort of the input vector. We can improve the complexity to $O(m)$ in expectation using a similar divide and conquer strategy to that used in \cite{condat_2016, duchi_etal_2008} for computing the projection onto the simplex.  The expected $O(m)$ algorithm is provided in Appendix \ref{app:dc_alg}. 

\begin{algorithm}
\caption{Computation of the proximal operator of the $\ell_\infty$ norm}
\begin{algorithmic}[1]
\Require $\mathbf{x} \in \mathbb{R}^{m}$, $\alpha \geq 0$
\Ensure $\textbf{prox}_{\alpha ||\cdot||_\infty}(\mathbf{x}) \in \mathbb{R}^{m}$ 
\If {$||\mathbf{x}||_1 \leq \alpha$}
    \State $\textbf{prox}_{\alpha ||\cdot||_\infty}(\mathbf{x}) = \mathbf{0}$
\Else
\State Let $\mathbf{s}$ be a permutation of $|\mathbf{x}|$ such that $\mathbf{s}_1 \geq \mathbf{s}_2 \geq \cdots \geq \mathbf{s}_m \geq 0$. Set $\mathbf{s}_{m+1} = 0$.    
\State $\gamma = 0$
\State $i = 1$
\While{$i \leq m$}
    \Comment Assume that $|I_t| = i$, i.e., $\mathbf{s}_{i+1} < t \leq \mathbf{s}_{i}$
    \State $\gamma = \gamma + \mathbf{s}_i$
    \State $j=1$
    \While{$i+j \leq m$ and $\mathbf{s}_{i+j} == \mathbf{s}_i$}
    \Comment Account for repeated elements in $\mathbf{s}$
         \State $\gamma = \gamma + \mathbf{s}_i$
         \State $j = j+1$
    \EndWhile
    \State $i = i+(j-1)$
    \State $t_0 = (\gamma - \alpha)/i$ 
    \Comment Find the minimum of $\psi(t)$, $(\sum_{k=1}^{i}\mathbf{s}_k - \alpha)/i$ 
    \If { $\mathbf{s}_{i+1} < t_0 \leq \mathbf{s}_{i}$} \Comment If $t_0$ satisfies the assumption from Line 7
        \State $\tau = t_0$
        \State \textbf{break}
    \EndIf
    \State $i=i+1$
\EndWhile
\State $\forall k \in [m]$ compute $[\textbf{prox}_{\alpha ||\cdot||_\infty}(\mathbf{x})]_k = \sigma_{\tau}(\mathbf{x}_k)$    
\EndIf
\State \Return $\textbf{prox}_{\alpha ||\cdot||_\infty}(\mathbf{x})$  
\end{algorithmic}
\label{alg:base_min}
\end{algorithm}

As mentioned in Section \ref{sec:rel_work}, a common method for calculating $\textbf{prox}_{\alpha ||\cdot||_\infty}(\mathbf{x})$ when $\alpha > 0$ is to use the Moreau decomposition: 
\[
\textbf{prox}_{\alpha ||\cdot||_\infty}(\mathbf{x}) = \mathbf{x} - \alpha \mathcal{P}_{||\cdot||_1 \leq 1} (\mathbf{x}/\alpha), 
\]
where $\mathcal{P}_{||\cdot||_1 \leq r}(\mathbf{v}): \mathbb{R}^m \rightarrow \mathbb{R}^m$ is the projection of $\mathbf{v}$ onto the $\ell_1$ ball of radius $r$, 
\[
\mathcal{P}_{||\cdot||_1 \leq r}(\mathbf{v}) = \argmin_{||\mathbf{y}||_1 \leq r} \frac{1}{2} ||\mathbf{y}-\mathbf{v}||^2_2.
\] 
By the Moreau decomposition and \cite[Proposition 2.1]{condat_2016}, which relates projection onto the $\ell_1$ ball to projection onto the simplex, we have
\begin{equation*}
    \begin{split}
        \textbf{prox}_{\alpha ||\cdot||_\infty}(\mathbf{x}) 
        & = \mathbf{x} - \alpha \mathcal{P}_{||\cdot||_1 \leq 1} (\mathbf{x}/\alpha) \\
        & = \mathbf{x} - \mathcal{P}_{||\cdot||_1 \leq \alpha} (\mathbf{x}), \\
        & = \mathbf{x} - 
            \begin{cases}
                \mathbf{x}, & \text{if}\ ||\mathbf{x}||_1 \leq \alpha \\
                 \text{sign}(\mathbf{x}) \circ \mathcal{P}_{||\cdot||_1 = \alpha} (|\mathbf{x}|),  & \text{else}\  \\
            \end{cases} \\
    \end{split}
\end{equation*}
where $\mathcal{P}_{||\cdot||_1 = \alpha} (\mathbf{v}): \mathbb{R}^m \rightarrow \mathbb{R}^m$ is the projection of $\mathbf{v}$ onto the simplex, i.e., 
\[
\mathcal{P}_{||\cdot||_1 = r}(\mathbf{v}) = \argmin_{\substack{||\mathbf{y}||_1 = r \\ \mathbf{y}_i \geq 0 }} \frac{1}{2} ||\mathbf{y}-\mathbf{v}||^2_2,
\]
and element-wise vector multiplication is denoted using $\circ$, e.g., $\mathbf{a} \circ \mathbf{b} = (\mathbf{a}_1\mathbf{b_1}, \ldots, \mathbf{a}_m\mathbf{b}_m)$.  Hence, computing $\textbf{prox}_{\alpha ||\cdot||_\infty}(\mathbf{x})$ using the Moreau decomposition is dependent on $\mathcal{P}_{||\cdot||_1 = \alpha} (|\mathbf{x}|)$.  The $O(m \log m)$ algorithm for projection onto the simplex, mentioned in Section \ref{sec:rel_work}, is provided for reference in Algorithm \ref{alg:simp}. 

\begin{algorithm}
\caption{Projection onto the simplex \cite{held_etal_1974, shalev-shwartz_singer_2006}}
\begin{algorithmic}[1]
\Require $\mathbf{x} \in \mathbb{R}^{m}$, $\alpha > 0$
\Ensure $\mathcal{P}_{||\cdot||_1 = \alpha} (\mathbf{x}) \in \mathbb{R}^{m}$ 
\State Sort $\mathbf{x}$ into $\mathbf{v}$ such that $\mathbf{v}_1 \geq \mathbf{v}_2 \geq \cdots \geq \mathbf{v}_m$.
\State $\omega = 0$
\For{$i = 1 \: \text{to} \: m$} 
    \Comment Find $i^* = \max_{1 \leq i \leq m} \{i \; | \; (\sum_{k=1}^i \mathbf{v}_k - \alpha)/i < \mathbf{v}_i \}$
    \State $\omega = \omega + \mathbf{v}_i$
    \State $t_0 = (\omega - \alpha)/i$ 
    \If { $t_0 < \mathbf{v}_{i}$} 
        \State $i^* = i$
        \State $\tau_{\text{simplex}} = t_0$
        \Comment $\tau_{\text{simplex}} = (\sum_{k=1}^{i^*} \mathbf{v}_k - \alpha)/i^*$
    \EndIf
\EndFor
\State $\forall k \in [m]$ compute $[\mathcal{P}_{||\cdot||_1 = \alpha} (\mathbf{x})]_k = \max \{\mathbf{x}_k - \tau_{\text{simplex}}, 0 \}$    
\State \Return $\mathcal{P}_{||\cdot||_1 = \alpha} (\mathbf{x})$  
\end{algorithmic}
\label{alg:simp}
\end{algorithm}

For $\mathbf{x} \in \mathbb{R}^m$ such that $|| \mathbf{x}||_1 > \alpha$, Algorithm \ref{alg:base_min} to compute $\textbf{prox}_{\alpha ||\cdot||_\infty}(\mathbf{x})$ and Algorithm \ref{alg:simp} to compute $\mathcal{P}_{||\cdot||_1 = \alpha} (|\mathbf{x}|)$ are similar in that a threshold is iteratively computed and then used to compute the appropriate quantity.  We show that these thresholds are actually equal.  

\begin{restatable}{theorem}{sametThm}
\label{thm:same_t}
    For $\alpha > 0$, let $\mathbf{x} \in \mathbb{R}^m$ be such that $||\mathbf{x}||_1 > \alpha$, $\tau$ be calculated as in Algorithm \ref{alg:base_min} for finding $\textbf{prox}_{\alpha ||\cdot||_\infty}(\mathbf{x})$, and $\tau_{\text{simplex}}$ as in Algorithm \ref{alg:simp} for finding $\mathcal{P}_{||\cdot||_1 = \alpha}(|\mathbf{x}|)$.  Then $\tau = \tau_{\text{simplex}}$.
\end{restatable}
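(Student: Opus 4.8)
The plan is to exhibit $\tau$ and $\tau_{\text{simplex}}$ as solutions of a single scalar equation that has a unique solution. Define
\[
h(t) \;=\; \sum_{k=1}^m \max\{\,|\mathbf{x}_k| - t,\; 0\,\}, \qquad t \geq 0 .
\]
First I would record that $h$ is continuous, piecewise linear, and nonincreasing, with $h(0) = ||\mathbf{x}||_1$ and $h(t) = 0$ for $t \geq \mathbf{s}_1$; moreover on $[0,\mathbf{s}_1]$ its slope never exceeds $-|I_t| \le -1$, so $h$ is strictly decreasing there. Since by hypothesis $0 < \alpha < ||\mathbf{x}||_1$, this already gives that $h(t) = \alpha$ has exactly one solution, lying in $(0,\mathbf{s}_1)$. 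The proof then reduces to checking that $\tau$ and $\tau_{\text{simplex}}$ each solve $h(t) = \alpha$.

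For $\tau$: by Theorem \ref{thm:psi}, $\psi \in \mathcal{C}^1$ on $t>0$ with $\psi'(t) = \alpha - \sum_{k \in I_t}(|\mathbf{x}_k| - t)$, and this sum equals $h(t)$ because indices with $|\mathbf{x}_k| < t$ (not in $I_t$) and indices with $|\mathbf{x}_k| = t$ contribute $0$ to both. Also by Theorem \ref{thm:psi}, since $||\mathbf{x}||_1 > \alpha$, the minimizer $\tau = \argmin_{t\geq 0}\psi(t)$ satisfies $0 < \tau \leq \mathbf{s}_1$; because $\psi'(\mathbf{s}_1) = \alpha - h(\mathbf{s}_1) = \alpha > 0$ and $\psi$ is strictly convex on $[0,\mathbf{s}_1]$, the minimizer is not the right endpoint, so $\tau \in (0,\mathbf{s}_1)$ and $\psi'(\tau)=0$, i.e.\ $h(\tau) = \alpha$. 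One can also see this straight from Algorithm \ref{alg:base_min}: when the \textbf{break} fires, the running sums give $\gamma = \sum_{k=1}^{i}\mathbf{s}_k$, and the test $\mathbf{s}_{i+1} < t_0 \le \mathbf{s}_i$ forces $|I_{t_0}| = i$, whence $h(\tau) = \sum_{k=1}^{i}(\mathbf{s}_k - t_0) = \gamma - i\,t_0 = \alpha$.

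For $\tau_{\text{simplex}}$: Algorithm \ref{alg:simp}, applied to $|\mathbf{x}|$, correctly computes $\mathcal{P}_{||\cdot||_1 = \alpha}(|\mathbf{x}|)$ \cite{held_etal_1974, shalev-shwartz_singer_2006}, whose $k$-th coordinate is $\max\{|\mathbf{x}_k| - \tau_{\text{simplex}},\,0\}$; since this vector lies on the simplex $\{\mathbf{y} \geq \mathbf{0} : ||\mathbf{y}||_1 = \alpha\}$, its coordinates sum to $\alpha$, i.e.\ $h(\tau_{\text{simplex}}) = \alpha$. Finally $\tau_{\text{simplex}} = \big(\sum_{k=1}^{i^*}\mathbf{s}_k - \alpha\big)/i^* < \mathbf{s}_{i^*} \leq \mathbf{s}_1$ by the defining inequality for $i^*$, and $\tau_{\text{simplex}} > 0$ since $h(\tau_{\text{simplex}}) = \alpha < ||\mathbf{x}||_1 = h(0)$ and $h$ is nonincreasing; hence $\tau_{\text{simplex}} \in (0,\mathbf{s}_1)$. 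Both $\tau$ and $\tau_{\text{simplex}}$ therefore lie in $(0,\mathbf{s}_1)$ and solve $h(t) = \alpha$, so by uniqueness $\tau = \tau_{\text{simplex}}$.

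The step I expect to need the most care is showing that the operationally-defined output $\tau$ of Algorithm \ref{alg:base_min} really satisfies $h(\tau) = \alpha$. If one prefers not to lean on the already-asserted fact that Algorithm \ref{alg:base_min} returns $\argmin_{t\geq 0}\psi(t)$, one must instead chase the loop invariants: that the inner \textbf{while} loop over repeated values of $\mathbf{s}$ leaves $\gamma = \sum_{k=1}^{i}\mathbf{s}_k$, that the algorithm terminates through the \textbf{break} (this is where the strict convexity from Theorem \ref{thm:psi} is used), and that the acceptance test $\mathbf{s}_{i+1} < t_0 \le \mathbf{s}_i$ pins $|I_{t_0}|$ to $i$. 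Everything else — continuity and strict monotonicity of $h$, the identity $\psi' = \alpha - h$ on $t>0$, and the standard correctness of Algorithm \ref{alg:simp} — is routine.
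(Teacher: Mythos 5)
Your proof is correct, and it takes a genuinely different route from the paper's. The paper argues combinatorially: it tracks the stopping index of each algorithm, shows by induction that for all $j$ beyond the index $i$ found by Algorithm \ref{alg:base_min} the test in line 6 of Algorithm \ref{alg:simp} fails, and needs a separate lemma (on repeated values of $\mathbf{s}$) plus a second case to handle $\tau = \mathbf{s}_i$. You instead characterize both thresholds as the unique root of the single strictly decreasing equation $h(t) = \sum_k \max\{|\mathbf{x}_k| - t, 0\} = \alpha$ on $(0, \mathbf{s}_1)$, which collapses the tie case entirely and also makes transparent the paper's later observation that $\tau = \alpha\lambda$ where $\lambda$ solves $\sum_k \max(|\mathbf{v}_k| - \lambda, 0) = 1$. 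The trade-offs: your argument is shorter and more conceptual, but it leans on two facts you should make explicit. First, that Algorithm \ref{alg:base_min} actually terminates through the \textbf{break} — you flag this correctly; it follows from the existence of the root $t^*$ of $h(t)=\alpha$ in some interval $(\mathbf{s}_{i+1},\mathbf{s}_i]$, on which $h$ is affine with slope $-i$, so $t_0 = t^*$ passes the acceptance test at that iteration. Second, that $h(\tau_{\text{simplex}}) = \alpha$; citing external correctness of Algorithm \ref{alg:simp} works, but a two-line self-contained check is available: by maximality of $i^*$ one has $(\sum_{k=1}^{i^*}\mathbf{s}_k-\alpha)/i^* \geq \mathbf{s}_{i^*+1}$, so $\mathbf{s}_{i^*+1} \leq \tau_{\text{simplex}} < \mathbf{s}_{i^*}$, whence $\{k: \mathbf{s}_k > \tau_{\text{simplex}}\} = [i^*]$ and $h(\tau_{\text{simplex}}) = \sum_{k=1}^{i^*}\mathbf{s}_k - i^*\tau_{\text{simplex}} = \alpha$ (and $i^*$ exists because the test holds at $i=1$ whenever $\alpha>0$). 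The paper's index-matching proof yields the slightly stronger byproduct of identifying $i^*$ with the index at which Algorithm \ref{alg:base_min} stops; your proof yields only the equality of thresholds, which is all the theorem asserts.
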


\begin{proof}
    See Appendix \ref{app:samet_pf}.
\end{proof}

So, the same threshold is used to compute $\textbf{prox}_{\alpha ||\cdot||_\infty}(\mathbf{x})$ in Algorithm \ref{alg:base_min} and $\mathcal{P}_{||\cdot||_1 = \alpha} (|\mathbf{x}|)$ in Algorithm \ref{alg:simp}; however, how the threshold is applied to the input vector to compute the final result differs between these two algorithms. Let the common threshold be $\tau$.  By the Moreau decomposition and Theorem \ref{thm:same_t}, for $\alpha > 0$ we have
\begin{equation*}
    \begin{split}
        [\textbf{prox}_{\alpha ||\cdot||_\infty}(\mathbf{x})]_k 
        & = \mathbf{x}_k - 
            \begin{cases}
                \mathbf{x}_k, & \text{if}\ ||\mathbf{x}||_1 \leq \alpha \\
                 \text{sign}(\mathbf{x}_k) \max\{|\mathbf{x}_k| -\tau, 0\},  & \text{else}\  \\
            \end{cases} \\
        & = \begin{cases}
                0, & \text{if}\ ||\mathbf{x}||_1 \leq \alpha \\
                \sigma_{\tau}(\mathbf{x}_k),  & \text{else,}\  \\
            \end{cases} \\    
    \end{split}
\end{equation*}
which is our approach to computing $\textbf{prox}_{\alpha ||\cdot||_\infty}(\mathbf{x})$.

We can also relate the threshold computed in Algorithm \ref{alg:base_min} to the solution to the projection onto the $\ell_1$ ball.  The solution to $\mathcal{P}_{||\cdot||_1 \leq 1}(\mathbf{v})$ is 
\begin{equation*}
\label{eqn:proj_l1}
[\mathcal{P}_{||\cdot||_1 \leq 1}(\mathbf{v})]_k  = \text{sign}(\mathbf{v}_k) \max(|\mathbf{v}_k| - \lambda, 0) =  
    \begin{cases}
        \mathbf{v}_k - \lambda, & \text{if}\ \mathbf{v}_k \geq \lambda \\
        0, & \text{if}\ |\mathbf{v}_k| < \lambda \\
        \mathbf{v}_k + \lambda, & \text{if}\ \mathbf{v}_k \leq -\lambda, \\
    \end{cases}
\end{equation*}
where the threshold $\lambda$ is computed using the equation $\sum_{k=1}^m \max (|\mathbf{v}_k| - \lambda, 0) = 1$.
Thus, by the Moreau decomposition, for $\alpha > 0$ we have
\begin{equation*}
\label{eqn:prox_MD}
\begin{split}
[\textbf{prox}_{\alpha ||\cdot||_\infty}(\mathbf{x})]_k & = \mathbf{x}_k - \alpha 
    \begin{cases}
        \mathbf{x}_k/\alpha - \lambda, & \text{if}\ \mathbf{x}_k/\alpha \geq \lambda \\
        0, & \text{if}\ |\mathbf{x}_k/\alpha| < \lambda \\
        \mathbf{x}_k/\alpha + \lambda, & \text{if}\ \mathbf{x}_k/\alpha \leq -\lambda, \\
    \end{cases} \\
   & = 
    \begin{cases}
        \alpha \lambda , & \text{if}\ \mathbf{x}_k \geq \alpha \lambda \\
        \mathbf{x}_k, & \text{if}\ |\mathbf{x}_k| < \alpha \lambda \\
        -\alpha \lambda, & \text{if}\ \mathbf{x}_k \leq - \alpha \lambda \\
    \end{cases} \\
   & = \sigma_{\alpha \lambda}(\mathbf{x}_k), \\
    \end{split}
\end{equation*}
which is Equation \ref{eqn:prox_k} with $\tau = \alpha \lambda$.  Hence the threshold computed using Algorithm \ref{alg:base_min}, $\tau$, and that for computing $\mathcal{P}_{||\cdot||_1 \leq 1}(\mathbf{x})$, $\lambda$, are related as well.

\section{Computation by Neural Network}
\label{sec:nn}

The previous algorithms presented to compute $\textbf{prox}_{\alpha ||\cdot||_\infty}(\mathbf{x})$ (including Algorithm \ref{alg:eff_min} in Appendix \ref{app:dc_alg}) require a sort of the input vector or at least a partial sort of the input vector similar to quicksort. For $\mathbf{x} \in \mathbb{R}^m$ this leads to $O(m \log m)$ or expected $O(m)$ and worst case $O(m^2)$ computation time.  In this section we present an $O(m)$ method to approximate $\textbf{prox}_{\alpha ||\cdot||_\infty}(\mathbf{x})$ using a neural network. Specifically, we use the neural network to approximate $\tau$ without requiring a sort of the input vector.  A novel aspect of the network is that it is able to accept vectors of varying lengths due to a feature selection process that uses moments of the input data.  This property is desired since 1) $\textbf{prox}_{\alpha ||\cdot||_\infty}(\mathbf{x})$ can be computed for a vector of any length and 2) it allows the user to train one network instead of one per vector length in the dataset. We present results on the accuracy of the approximation, feature importance, and computational efficiency of the approach.  We also compare the network loss to that of a ``vanilla neural network'', which naively solves the problem and serves as a baseline comparison.

\subsection{Data Preprocessing and Feature Selection}

For this approach, data consists of $\mathbf{x}$-$\alpha$-$\tau$ triples, where $\mathbf{x}$ can be any length and $\tau = || \textbf{prox}_{\alpha ||\cdot||_\infty}(\mathbf{x}) ||_\infty$.  To start, we present a helpful theorem for data preprocessing. 
  
\begin{restatable}{theorem}{tauLinOp}
\label{thm:tau_linop}
    Let $\alpha > 0$, $\mu \in \mathbb{R}$, and $||\mathbf{x}||_1 > \alpha$.  If $\tau = || \textbf{prox}_{\alpha ||\cdot||_\infty}(\mathbf{x}) ||_\infty$, then $|| \textbf{prox}_{\, ||\cdot||_\infty}(|\mathbf{x}|/\alpha + \mu) ||_\infty = \tau/\alpha + \mu$.  
\end{restatable}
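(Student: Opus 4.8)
The plan is to remove the two rescalings the statement builds in --- the division by $\alpha$ and the absolute value --- and then treat the genuinely new ingredient, namely the response of the threshold to the additive shift by $\mu$. For the $\alpha$-rescaling I would first record the elementary identity $\textbf{prox}_{\alpha\|\cdot\|_\infty}(\mathbf{x})=\alpha\,\textbf{prox}_{\|\cdot\|_\infty}(\mathbf{x}/\alpha)$, which follows from the change of variables $\mathbf{y}=\alpha\mathbf{z}$ in Equation \ref{eqn:proxop}; taking $\|\cdot\|_\infty$ of both sides gives $\tau/\alpha=\|\textbf{prox}_{\|\cdot\|_\infty}(\mathbf{x}/\alpha)\|_\infty$. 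For the absolute value I would use that, by Equations \ref{eqn:prox_k}--\ref{eqn:tau} and Theorem \ref{thm:psi}, $\|\textbf{prox}_{\|\cdot\|_\infty}(\mathbf{v})\|_\infty=\argmin_{t\ge0}\psi(t)$, and this $\psi$ depends on $\mathbf{v}$ only through $|\mathbf{v}|$; hence $\|\textbf{prox}_{\|\cdot\|_\infty}(\mathbf{x}/\alpha)\|_\infty=\|\textbf{prox}_{\|\cdot\|_\infty}(|\mathbf{x}|/\alpha)\|_\infty$. Writing $\mathbf{w}:=|\mathbf{x}|/\alpha\ge\mathbf{0}$ and $\tilde\tau:=\tau/\alpha$, and letting $\psi_{\mathbf{v}}$ denote the function of Theorem \ref{thm:psi} with $\alpha=1$ and input $\mathbf{v}$, the lemma reduces to showing $\argmin_{t\ge0}\psi_{\mathbf{w}+\mu}(t)=\tilde\tau+\mu$.

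For this reduced claim I would use the first-order characterization coming out of Theorem \ref{thm:psi}. From $\|\mathbf{x}\|_1>\alpha$ we get $\|\mathbf{w}\|_1>1$, so that theorem gives $0<\tilde\tau\le\|\mathbf{w}\|_\infty$; since $\tilde\tau>0$ it is an interior minimizer of the $\mathcal{C}^1$ function $\psi_{\mathbf{w}}$, hence $\psi_{\mathbf{w}}'(\tilde\tau)=0$. In the setting where the lemma is applied the shifted vector $\mathbf{w}+\mu=|\mathbf{x}|/\alpha+\mu$ is componentwise nonnegative, so for every $r\ge 0$ the active set $\{k:|\mathbf{w}_k+\mu|\ge r+\mu\}$ equals $\{k:\mathbf{w}_k\ge r\}$ and every residual $(\mathbf{w}_k+\mu)-(r+\mu)$ equals $\mathbf{w}_k-r$; consequently $\psi_{\mathbf{w}+\mu}(r+\mu)=\psi_{\mathbf{w}}(r)+\mu$ identically in $r$, and differentiating at $r=\tilde\tau$ gives $\psi_{\mathbf{w}+\mu}'(\tilde\tau+\mu)=0$. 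Moreover $\tilde\tau+\mu\in(0,\|\mathbf{w}+\mu\|_\infty]$, using $0<\tilde\tau\le\|\mathbf{w}\|_\infty$ and $\|\mathbf{w}+\mu\|_\infty=\|\mathbf{w}\|_\infty+\mu$. By Theorem \ref{thm:psi}, $\psi_{\mathbf{w}+\mu}$ is strictly convex on $[0,\|\mathbf{w}+\mu\|_\infty]$, so its derivative is strictly increasing and has at most one zero there; and since $\|\mathbf{w}+\mu\|_1>1$, the minimizer $\argmin_{t\ge0}\psi_{\mathbf{w}+\mu}(t)$ is positive, hence interior, hence that unique zero, hence equal to $\tilde\tau+\mu$. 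This proves the claim, and therefore the lemma.

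The scaling and absolute-value reductions are routine; the delicate step is the final bookkeeping --- verifying that $\tilde\tau+\mu$ actually lands in the interval $(0,\|\mathbf{w}+\mu\|_\infty]$ on which Theorem \ref{thm:psi} supplies a unique critical point, and in particular that $\|\mathbf{w}+\mu\|_1>1$ so the minimizer is not $0$. This is precisely where the hypotheses $\alpha>0$, $\|\mathbf{x}\|_1>\alpha$, and the nonnegativity of $|\mathbf{x}|/\alpha+\mu$ (together with $\tau/\alpha+\mu\ge 0$, automatic in the regime of interest) are consumed. An equivalent shortcut avoiding $\psi$ altogether is to combine the Moreau decomposition with the identity $\tilde\tau=\tau_{\text{simplex}}$ from Theorem \ref{thm:same_t}: the simplex threshold solving $\sum_k\max(\mathbf{w}_k+\mu-t,0)=1$ is exactly $\mu$ larger than the one solving $\sum_k\max(\mathbf{w}_k-t,0)=1$, which yields the result directly.
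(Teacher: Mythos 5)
Your argument is correct and lands on the same underlying fact as the paper, but it is organized differently. The paper's proof works directly with the stopping condition of Algorithm \ref{alg:base_min}: it takes the index $i$ with $\mathbf{s}_{i+1} < \tau \leq \mathbf{s}_i$ and $\tau = (\sum_{k=1}^i \mathbf{s}_k - \alpha)/i$, and verifies in one line that the affinely transformed sorted vector $\widehat{\mathbf{s}} = \mathbf{s}/\alpha + \mu$ satisfies the identical condition with $\alpha = 1$ and threshold $\tau/\alpha + \mu$. You instead factor the affine map into two pieces --- the prox scaling identity $\textbf{prox}_{\alpha\|\cdot\|_\infty}(\mathbf{x}) = \alpha\,\textbf{prox}_{\|\cdot\|_\infty}(\mathbf{x}/\alpha)$ and the covariance $\psi_{\mathbf{w}+\mu}(r+\mu) = \psi_{\mathbf{w}}(r) + \mu$ --- and then use the first-order condition together with the strict convexity and uniqueness statements of Theorem \ref{thm:psi} to identify the shifted critical point as the new minimizer. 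What your route buys is an explicit accounting of which hypothesis each step consumes, and a reusable scaling identity; what the paper's route buys is brevity, since the interval test of Algorithm \ref{alg:base_min} already packages existence and uniqueness of the threshold.

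One caveat, which applies equally to the paper's own proof: for the active sets to transform as you claim, you need $|\mathbf{x}|/\alpha + \mu$ to be componentwise nonnegative, and you also need $\tau/\alpha + \mu > 0$ and $\bigl\||\mathbf{x}|/\alpha + \mu\bigr\|_1 > 1$. None of these follow from the stated hypotheses for arbitrary $\mu \in \mathbb{R}$; a sufficiently negative $\mu$ breaks the conclusion outright, since the right-hand side becomes negative while the left-hand side is a norm. You flag these conditions as holding ``in the regime of interest,'' whereas the paper silently assumes them by treating $\mathbf{s}/\alpha + \mu$ as the sorted vector of absolute values of $|\mathbf{x}|/\alpha + \mu$. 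This is a defect of the theorem statement rather than of your proof, and your bookkeeping is, if anything, the more explicit of the two.
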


\begin{proof}
    See Appendix \ref{app:tau_linop_pf}.
\end{proof}

We note that $\tau$ is a function of two variables, $\tau(|\mathbf{x}|, \alpha)$. By Theorem \ref{thm:tau_linop}, we can effectively remove $\alpha$ as a parameter of the problem by scaling $\mathbf{x}$ by $1/\alpha$.  Thus $\tau$ can be viewed as a function of only $\mathbf{x}$, i.e., $\tau(|\mathbf{x}|)$, that is permutation invariant with respect to the order of elements in $\mathbf{x}$.  

\begin{restatable}{theorem}{tauCont}
\label{thm:tau_cont}
    $\tau(|\mathbf{x}|)$: $\mathbb{R}^m \rightarrow \mathbb{R}$ is continuous. 
\end{restatable}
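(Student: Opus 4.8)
The plan is to establish continuity of $\tau(|\mathbf{x}|)$ by exploiting the characterization of $\tau$ as the unique root of a piecewise-defined equation together with the strict convexity established in Theorem \ref{thm:psi}. First I would reduce to showing that the map $\mathbf{x} \mapsto \tau$ is continuous on the open set $\{\mathbf{x} : \|\mathbf{x}\|_1 > \alpha\}$ (for fixed $\alpha > 0$, say $\alpha = 1$ by Theorem \ref{thm:tau_linop}), and separately that $\tau \to 0$ as $\|\mathbf{x}\|_1 \downarrow \alpha$, since $\tau = 0$ whenever $\|\mathbf{x}\|_1 \leq \alpha$ by Theorem \ref{thm:psi}; continuity across the boundary then follows because both one-sided limits agree with $0$. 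Since $\tau$ is permutation invariant, I may assume the coordinates are sorted, $|\mathbf{x}_1| \geq \cdots \geq |\mathbf{x}_m|$, which holds on a closed region, and continuity on $\mathbb{R}^m$ follows from continuity on each such sorted region (the sorting map is continuous and the pieces agree on overlaps).

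Next, the core of the argument: on the open region $\|\mathbf{x}\|_1 > \alpha$, the value $\tau = \tau(|\mathbf{x}|)$ is the unique minimizer of the strictly convex $\mathcal{C}^1$ function $\psi_{\mathbf{x}}(t) = \tfrac12 \sum_{k \in I_t}(|\mathbf{x}_k| - t)^2 + \alpha t$ over $[0, \|\mathbf{x}\|_\infty]$, and it is characterized by $\psi_{\mathbf{x}}'(\tau) = 0$, i.e. $\sum_{k : |\mathbf{x}_k| \geq \tau}(|\mathbf{x}_k| - \tau) = \alpha$. I would define $G(\mathbf{x}, t) = \sum_{k=1}^m \max(|\mathbf{x}_k| - t, 0) = \sum_{k \in I_t}(|\mathbf{x}_k| - t)$, which is jointly continuous in $(\mathbf{x}, t)$ (a finite sum of continuous functions), strictly decreasing in $t$ on $[0, \|\mathbf{x}\|_\infty]$ wherever it is positive, with $G(\mathbf{x}, 0) = \|\mathbf{x}\|_1 > \alpha$ and $G(\mathbf{x}, \|\mathbf{x}\|_\infty) = 0 < \alpha$. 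Hence $\tau$ is the unique solution of $G(\mathbf{x}, t) = \alpha$. Continuity of $\tau$ in $\mathbf{x}$ is then a standard implicit-function-type argument for monotone continuous equations: given $\mathbf{x}^{(0)}$ and $\varepsilon > 0$ small enough that $[\tau_0 - \varepsilon, \tau_0 + \varepsilon] \subset (0, \|\mathbf{x}^{(0)}\|_\infty)$, strict monotonicity gives $G(\mathbf{x}^{(0)}, \tau_0 + \varepsilon) < \alpha < G(\mathbf{x}^{(0)}, \tau_0 - \varepsilon)$; by joint continuity the same strict inequalities hold for $\mathbf{x}$ near $\mathbf{x}^{(0)}$, forcing the root $\tau(|\mathbf{x}|)$ into $(\tau_0 - \varepsilon, \tau_0 + \varepsilon)$.

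I expect the main technical nuisances to be two edge cases rather than the bulk argument. First, the boundary $\|\mathbf{x}\|_1 = \alpha$: here $\tau_0 = 0$, so the interval $[\tau_0 - \varepsilon, \tau_0 + \varepsilon]$ cannot be taken inside $(0, \|\mathbf{x}\|_\infty)$; instead I would argue directly that $0 \leq \tau(|\mathbf{x}|) \leq G(\mathbf{x}, 0) - \alpha$ times a bounded factor — more precisely, from $G(\mathbf{x}, \tau) = \alpha$ and $G(\mathbf{x}, 0) = \|\mathbf{x}\|_1$ one gets $\|\mathbf{x}\|_1 - \alpha = G(\mathbf{x},0) - G(\mathbf{x},\tau) \geq \tau \cdot |I_\tau| \geq \tau$ (since $|I_\tau| \geq 1$ when $\tau > 0$), so $\tau \leq \|\mathbf{x}\|_1 - \alpha \to 0$. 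Second, the case where the limiting root $\tau_0$ coincides with $\|\mathbf{x}^{(0)}\|_\infty$ or with some $|\mathbf{x}^{(0)}_k|$, where $G(\mathbf{x}, \cdot)$ has a kink; joint continuity of $G$ and its strict monotonicity on the relevant interval still apply, so the sandwiching argument goes through with minor care in choosing $\varepsilon$. Once these cases are handled, assembling them with the sorted-region reduction yields continuity of $\tau(|\mathbf{x}|)$ on all of $\mathbb{R}^m$.
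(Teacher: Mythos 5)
Your proposal is correct, and it takes a genuinely different route from the paper. The paper proves continuity by a direct $\epsilon$--$\delta$ argument on the sorted vector $\widehat{\mathbf{s}}$: it splits into the case where $\widehat{\tau}$ lies strictly between consecutive sorted values (so the active index set is stable under perturbation and the closed-form $\tau = (\sum_{k=1}^i \mathbf{s}_k - \alpha)/i$ gives the bound directly) and the harder case $\widehat{\tau} = \widehat{\mathbf{s}}_i$, which requires four subcases and leans on Lemma \ref{lem:repeat_t} and Theorem \ref{thm:same_t} to pin down the range of admissible active indices for the perturbed vector. You instead characterize $\tau$ via the first-order condition $G(\mathbf{x},\tau) = \alpha$ with $G(\mathbf{x},t) = \sum_k \max(|\mathbf{x}_k| - t, 0)$ (which is exactly $\alpha - \psi'(t)$, and, up to scaling, the same function defining the $\ell_1$-projection threshold $\lambda$ in the Moreau-decomposition discussion), and then run a sandwich argument using joint continuity and monotonicity of $G$ in $t$. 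This is sound: $G(\mathbf{x}^{(0)}, \tau_0 - \varepsilon) > \alpha > G(\mathbf{x}^{(0)}, \tau_0 + \varepsilon)$ persists for nearby $\mathbf{x}$, and monotonicity of $G(\mathbf{x},\cdot)$ forces the root into $(\tau_0-\varepsilon, \tau_0+\varepsilon)$; the interval fits inside $(0,\|\mathbf{x}^{(0)}\|_\infty)$ because Theorem \ref{thm:psi} gives $\tau < \|\mathbf{x}\|_\infty$ strictly for $\alpha>0$, and your bound $\tau \le \|\mathbf{x}\|_1 - \alpha$ (from $G(\mathbf{x},0)-G(\mathbf{x},\tau) = \sum_k \min(|\mathbf{x}_k|,\tau) \ge \tau\,|I_\tau| \ge \tau$) handles the boundary $\|\mathbf{x}\|_1 = \alpha$. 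The kink cases you flag as a ``technical nuisance'' are in fact non-issues in your formulation, which is the main advantage: the entire Case~2 apparatus of the paper disappears. What the paper's approach buys in exchange is an explicit, quantitative $\delta(\epsilon)$ and reuse of already-established machinery; your sorted-region reduction in the first paragraph is unnecessary (since $G$ is already symmetric in the coordinates) but harmless.
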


\begin{proof}
    See Appendix \ref{app:tau_cont_pf}
\end{proof}

By Theorem \ref{thm:tau_cont} and the Stone-Weierstrass Theorem, we can approximate $\tau(|\mathbf{x}|)$ by a polynomial.  Since $\tau(|\mathbf{x}|)$ is permutation invariant, this polynomial is symmetric.  Hence, it can be written uniquely as a polynomial in the elementary symmetric polynomials by the Fundamental Theorem of Symmetric Polynomials, and thus uniquely as a polynomial in power sums by the Newton-Girard formulas \cite{balan_etal_2022}.  For $\mathbf{x} \in \mathbb{R}^m$, we refer to the $k$-th power sum, $\sum_{i=1}^{m}\mathbf{x}_i^k$, as the $k$-th moment of $\mathbf{x}$.  

This reasoning motivated our choices of preprocessing and feature selection for each $\mathbf{x}$-$\alpha$-$\tau$ triple as seen in Algorithm \ref{alg:features_pre}.   We note that for any vector $\mathbf{x}$ such that $\| \widehat{\mathbf{x}} \|_1 \leq 1$ (or equivalently, $\|\mathbf{x}\|_1 \leq \alpha$, where $\widehat{\mathbf{x}} = |\mathbf{x}|/\alpha$), empty values are returned.  Since $\tau=0$ for these vectors, we effectively remove them from the dataset since a prediction of $\tau$ is not needed. 

\begin{algorithm}
\caption{Data preprocessing and feature generation}
\begin{algorithmic}[1]
\Require $\mathbf{x} \in \mathbb{R}^{m}$, $\alpha$, $\tau$, $k$ \Comment $k$ is the number of moments to compute.
\Ensure $\mathbf{w} \in \mathbb{R}^{k+3}$, $\widehat{\tau} \in \mathbb{R}$
\State $\widehat{\mathbf{x}} = |\mathbf{x}|/\alpha$
\If {$\| \widehat{\mathbf{x}} \|_1 \leq 1$}
    \State ${\mathbf{w}} = \emptyset$, $\widehat{\tau} = \emptyset$  
    \Comment An empty vector and scalar are returned.
\Else
    \State $\mu = ||\widehat{\mathbf{x}}||_1/m$
    \State $\widehat{\mathbf{x}} = \widehat{\mathbf{x}} - \mu$ such that $\widehat{\mathbf{x}}_i = \widehat{\mathbf{x}}_i - \mu$ for $i \in [m]$.
    \Comment Center the data.
    \State $\mathbf{w}_1 = \min(\widehat{\mathbf{x}})$ and $\mathbf{w}_2 = \max(\widehat{\mathbf{x}})$
    \For {$j \in [k]$} \Comment{Calculate the moments.}
        \If {$j == 1$}
            \State $\mathbf{w}_3 = \frac{1}{m}\sum_{i=1}^m |\widehat{\mathbf{x}}_i|$
        \Else
            \State $\mathbf{w}_{j+2} = \sqrt[j]{\frac{1}{m}\sum_{i=1}^m \widehat{\mathbf{x}}_i^j}$
        \EndIf
    \EndFor
    \State $\mathbf{w}_{k+3} = \ln(m)$
    \State $\widehat{\tau} = \tau/\alpha - \mu$
\EndIf
\State \Return $\mathbf{w}$, $\widehat{\tau}$ 
\end{algorithmic}
\label{alg:features_pre}
\end{algorithm}

For a vector $\mathbf{x}$ of any length, Algorithm \ref{alg:features_pre} returns a set of features $\mathbf{w} \in \mathbb{R}^{k+3}$, where $k$, the number of moments to compute, is constant for all $\mathbf{x}$-$\alpha$-$\tau$ triples in the dataset.  In summary, for each $\mathbf{x}$-$\alpha$-$\tau$ triple, we scale and center $|\mathbf{x}|$ and $\tau$, creating $\widehat{\mathbf{x}}$ and $\widehat{\tau}$, and then compute the following features of $\widehat{\mathbf{x}}$: the minimum, maximum, $k$ moments, and the natural log of the length.  In order to keep all features on a similar scale, moments are scaled by $1/m$ and an appropriate root is taken, and the natural log of the length is used.  We note that for the first moment, we use $\frac{1}{m}\sum_{i=1}^m |\widehat{\mathbf{x}}_i|$ instead of $\frac{1}{m}\sum_{i=1}^m \widehat{\mathbf{x}}_i$ since $\frac{1}{m}\sum_{i=1}^m \widehat{\mathbf{x}}_i = 0$ due to the centering of the data.  The output of Algorithm \ref{alg:features_pre}, $\mathbf{w}$ and $\widehat{\tau}$, become input to the network during training.  The network will output an approximation to $\widehat{\tau}$.

\subsection{Numerical Experiments}

Experiments were run on the University of Virginia's High-Performance Computing system, Rivanna.  Rivanna's hardware includes multiple nodes and cores per node, and GPUs.  We used one (CPU) node, using 8 cores of an Intel(R) Xeon(R) Gold 6248 CPU at 2.50GHz and 72GB of RAM for all experiments unless otherwise noted.  

We performed six experiments using a neural network to predict $\widehat{\tau}$. The network architecture used in the experiments is given in Table \ref{tab:fnn_arch}.  This network is small enough that it did not require nor benefit from training on a GPU (the training time per epoch was longer when trained on the GPU rather than the CPU).  In each of the following experiments, the network was trained and tested using single precision PyTorch.  To train the network, we used the Adam optimizer with a learning rate of 0.001, and a batch size of 32.  The loss function used was mean squared error (MSE) as defined below, where $\widetilde{\tau}$ is the output of the network, i.e., the approximation to $\widehat{\tau}$, and $b$ is the batch size: 
\[
MSE = \frac{1}{b} \sum_{i=1}^{b} (\widetilde{\tau}_i - \widehat{\tau}_i)^2.
\]
In each experiment, Algorithm \ref{alg:features_pre} was used with $k=10$ moments to preprocess and generate features for each $\mathbf{x}$-$\alpha$-$\tau$ triple. The architecture and number of moments were chosen through experimentation with the goal of having high performance and low complexity. We found that increasing the number of hidden layers, and/or the number of neurons in each layer, and/or the number of moments did not lead to significant performance gains.

\begin{table}[h]
    \caption{Neural network architecture for experiments 1-6.  $k$ is the number of moments computed in Algorithm \ref{alg:features_pre}}
    \label{tab:fnn_arch}
    \begin{tabular}{l c c c c}
        \toprule
        \textbf{Layer} & \textbf{Input} & \textbf{1st Hidden} & \textbf{2nd Hidden} &  \textbf{Output} \\
        \midrule
        \textbf{Number of Neurons} & $k+3$ & 25 & 10 & 1 \\
        \textbf{Activation Function} & - & ReLU & ReLU & - \\
        \botrule
    \end{tabular}
\end{table}  

Data for each experiment consisted of 10,000 $\mathbf{x}$-$\alpha$-$\tau$ triples where the vector data was either Gaussian $\mathcal{N}(0,1)$ or uniformly distributed $\mathcal{U}(0,1)$ and vectors were of varying lengths (see Table \ref{tab:fnn_data} for details), $\alpha$'s were sampled from the $\mathcal{U}[1,6)$ distribution, and $\tau$'s were computed using Algorithm \ref{alg:base_min}.  Vector lengths were sampled from the discrete uniform distribution over [1,000, 2,000] or [1,000, 100,000].

\begin{table}[h]
    \caption{Summary of vector data for experiments 1-6}
    \label{tab:fnn_data}
    \begin{tabular}{cccl}
        \toprule
        \textbf{Ex} & \textbf{$\mathcal{N}(0,1)$ Vectors} & \textbf{$\mathcal{U}(0,1)$ Vectors} & \textbf{Vector Lengths}  \\
        \midrule
        1 & 10,000 & 0 & 1,000 - 2,000  \\
        2 & 10,000 & 0 & 1,000 - 100,000  \\
        3 & 0 & 10,000 & 1,000 - 2,000  \\
        4 & 0 & 10,000 & 1,000 - 100,000  \\
        5 & 5,000 & 5,000 & 1,000 - 2,000  \\
        6 & 5,000 & 5,000 & 1,000 - 100,000  \\
        \botrule
    \end{tabular}
\end{table}

Of the 10,000 $\mathbf{w}$-$\widehat{\tau}$ pairs generated by Algorithm \ref{alg:features_pre}, we used an 80-20 random train-test split for experiments 1-4 and an 80-20 random train-test split stratified on vector data distribution for experiments 5-6. 

\subsubsection{Learning Curves and Comparison with ``Vanilla Neural Network''}

The learning curves for experiments 1-6 are presented in Figure \ref{fig:fnn_lc}\footnote{The color palette used for all learning curve plots is cubehelix \cite{green_2011}.}. The training (line a) and testing (line b) errors for $\widehat{\tau}$ are the errors for the network output.  For a given epoch, the reported training error is the average error per batch. Testing was completed in a single batch, so the reported testing error is the MSE for the testing set.   For a given $\mathbf{x}$-$\alpha$-$\tau$ triple, an approximation to $\tau$ is $\alpha(\widetilde{\tau} + \mu)$, where $\widetilde{\tau}$ is the network output, and $\mu$ is computed during Algorithm \ref{alg:features_pre}.  Thus, we also present the testing error on $\tau$ (line c).  For experiments 5-6, Figure \ref{fig:fnn_lc} also includes the $\widehat{\tau}$ testing error for the Gaussian $\mathcal{N}(0,1)$ vector data (line b$_1$) and the uniformly distributed $\mathcal{U}(0,1)$ vector data (line b$_2$) separately.

\begin{sidewaysfigure}
    \centering
    \includegraphics[width=1.0\textwidth]{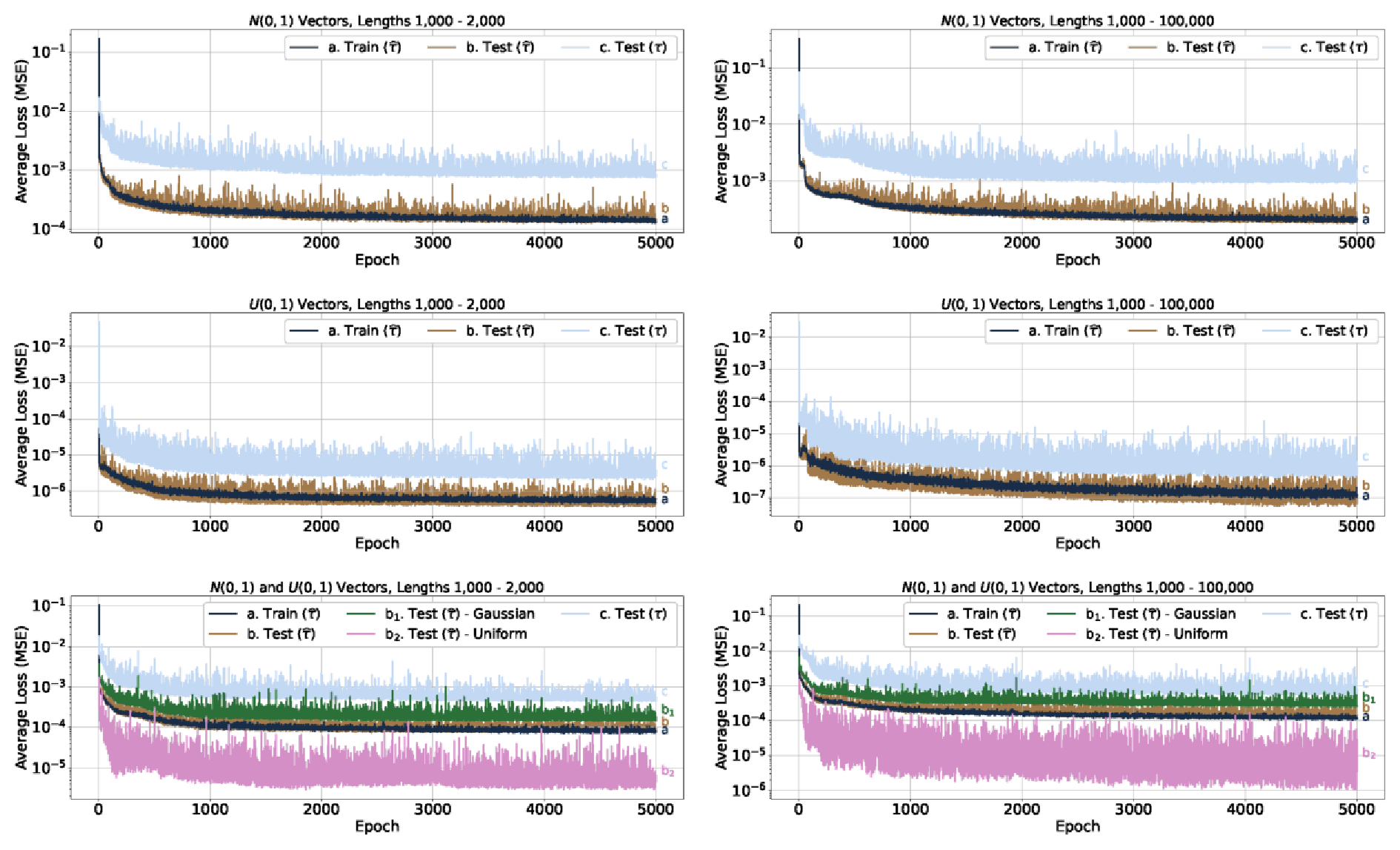}
    \caption{Neural network learning curves for experiments 1-6}
    \label{fig:fnn_lc}
\end{sidewaysfigure}

In all experiments the $\widehat{\tau}$ testing error is about one order of magnitude lower than that for $\tau$, which is to be expected since $\alpha \in [1,6)$.  In experiments 1-4, we see that the network performs better on $\mathcal{U}(0,1)$ vector data than $\mathcal{N}(0,1)$ vector data.  For vectors of length 1,000-2,000, the $\tau$ testing error for $\mathcal{U}(0,1)$ data is two orders of magnitude lower than that for $\mathcal{N}(0,1)$ data ($10^{-6}$ vs. $10^{-4}$), and for vectors of length 1,000-100,000 the difference is four orders of magnitude ($10^{-7}$ vs. $10^{-3}$).  This suggests that performance of the network is highly dependent on the vector data distribution.  See Appendix \ref{app:dens} for additional experiments that explore the difference in network performance for uniformly distributed and Gaussian vector data.  

We also note from experiments 3-4 that for $\mathcal{U}(0,1)$ vector data, a larger range of lengths in the dataset improves the $\tau$ testing error by an order of magnitude, i.e., $10^{-7}$ vs. $10^{-6}$.  For $\mathcal{N}(0,1)$ vector data, a larger range of lengths in the dataset does not significantly change the $\tau$ testing error.  These patterns generally hold for experiments 5-6 when the dataset includes both $\mathcal{N}(0,1)$ and $\mathcal{U}(0,1)$ vector data.  In addition, in experiments 5-6, the $\widehat{\tau}$ testing error is on the order of $10^{-4}$ for $\mathcal{N}(0,1)$ vector data, which is the same as in experiments 1-2.  For $\mathcal{U}(0,1)$ vector data in experiments 5-6, the $\widehat{\tau}$ testing error is on the order of $10^{-6}$, which is one to two orders of magnitude higher as compared to experiments 3-4, i.e., $10^{-6}$ vs. $10^{-7}$, for vector lengths 1,000-2,000 and $10^{-6}$ vs. $10^{-8}$, for vector lengths 1,000-100,000. 

To gauge the performance of the network, we compare it with a ``vanilla neural network'' that naively approximates $\tau$ using a neural network.  The vanilla network processes input vectors of different lengths by zero-padding the end of each vector until all vectors in the dataset are the same length.  By Theorem \ref{thm:zero_pad}, this does not affect $\tau$.

\begin{restatable}{theorem}{zeroPad}
\label{thm:zero_pad}
    Suppose $\mathbf{x} \in \mathbb{R}^{m}$ and $\tau = || \textbf{prox}_{\alpha ||\cdot||_\infty}(\mathbf{x}) ||_\infty$. If $\mathbf{x}_{\text{zero-pad}}$ is a zero padded version of $\mathbf{x}$, i.e., $\mathbf{x}_{\text{zero-pad}} = (\mathbf{x}_1, \mathbf{x}_2, ..., \mathbf{x}_m, 0, ..., 0)$, then $\tau_{\text{zero-pad}} = || \textbf{prox}_{\alpha ||\cdot||_\infty}(\mathbf{x}_{\text{zero-pad}}) ||_\infty = \tau$. 
\end{restatable}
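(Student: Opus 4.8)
The plan is to show that zero-padding leaves the auxiliary function $\psi$ of Theorem \ref{thm:psi} completely unchanged, so that its minimizer---which is exactly $\tau$---cannot change either.

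First I would fix notation: write $\mathbf{x}_{\text{zero-pad}} = (\mathbf{x}_1, \ldots, \mathbf{x}_m, 0, \ldots, 0) \in \mathbb{R}^{m+p}$ for some number $p \geq 1$ of appended zeros, and let $\psi$ and $\psi_{\text{zero-pad}}$ denote the functions from Theorem \ref{thm:psi} built from $\mathbf{x}$ and $\mathbf{x}_{\text{zero-pad}}$, respectively. The key observation is that $\psi_{\text{zero-pad}}(t) = \psi(t)$ for every $t \geq 0$. For $t > 0$, each appended coordinate satisfies $|0| = 0 < t$, so it never lies in the index set $I_t = \{k : |\mathbf{x}_k| \geq t\}$; hence $\frac12 \sum_{k \in I_t}(|\mathbf{x}_k| - t)^2$ is the same sum for both vectors, and the linear term $\alpha t$ is unaffected. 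For $t = 0$ the appended coordinates do enter $I_0$, but each contributes $(|0| - 0)^2 = 0$, so again $\psi_{\text{zero-pad}}(0) = \psi(0)$. Thus $\psi \equiv \psi_{\text{zero-pad}}$ as functions on $\mathbb{R}^+ \cup \{0\}$.

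Next I would invoke Theorem \ref{thm:psi} to pass from equality of functions to equality of minimizers. Since appending zeros changes neither the largest-magnitude entry nor the $\ell_1$ norm, we have $||\mathbf{x}_{\text{zero-pad}}||_\infty = ||\mathbf{x}||_\infty$ and $||\mathbf{x}_{\text{zero-pad}}||_1 = ||\mathbf{x}||_1$. Theorem \ref{thm:psi} then says $\psi$ (resp. $\psi_{\text{zero-pad}}$) is strictly convex on $[0, ||\mathbf{x}||_\infty]$ and that its minimizer $\tau$ (resp. $\tau_{\text{zero-pad}}$) lies in that interval; strict convexity on an interval forces the minimizer there to be unique. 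Because $\psi$ and $\psi_{\text{zero-pad}}$ coincide everywhere on $\mathbb{R}^+ \cup \{0\}$, these unique minimizers must agree, giving $\tau = \tau_{\text{zero-pad}}$. As a consistency check on the boundary, the equal $\ell_1$ norms also give $\tau = 0 \iff ||\mathbf{x}||_1 \leq \alpha \iff \tau_{\text{zero-pad}} = 0$ via the last clause of Theorem \ref{thm:psi}, which also handles the degenerate case $\mathbf{x} = \mathbf{0}$.

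There is essentially no obstacle here: the content of the proof is just the remark that zero coordinates are invisible to $\psi$. The only points needing a moment's care are the boundary value $t = 0$, where the appended zeros technically belong to $I_0$ yet contribute nothing to the quadratic term, and the explicit appeal to uniqueness of the minimizer coming from the strict-convexity statement in Theorem \ref{thm:psi}.
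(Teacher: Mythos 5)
Your proof is correct, but it takes a genuinely different route from the paper. The paper's proof is algorithmic and very short: it observes that the sorted vector $\mathbf{s}$ for $\mathbf{x}_{\text{zero-pad}}$ is just $\mathbf{s}$ for $\mathbf{x}$ with trailing zeros appended, so every iteration of Algorithm \ref{alg:base_min} is identical for the two inputs until termination, whence the same $\tau$ is produced. You instead argue at the level of the variational characterization: the objective $\psi$ of Theorem \ref{thm:psi} is literally the same function for both vectors (zero coordinates contribute nothing, including the boundary case $t=0$ where they enter $I_0$ but add $0$ to the sum), and since $\|\mathbf{x}\|_\infty$ and $\|\mathbf{x}\|_1$ are also unchanged, strict convexity on $[0,\|\mathbf{x}\|_\infty]$ together with the localization $\tau\le\|\mathbf{x}\|_\infty$ forces the minimizers to coincide. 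Your version is more self-contained and arguably more rigorous---it does not require tracing the control flow of the algorithm, and it handles the $\|\mathbf{x}\|_1\le\alpha$ branch uniformly---at the cost of invoking the convexity machinery of Theorem \ref{thm:psi}; the paper's version is a one-line observation but implicitly leans on the correctness of Algorithm \ref{alg:base_min}. One small point worth making explicit in your write-up: for $\alpha=0$ the minimizer of $\psi$ over all $t\ge 0$ is not unique (since $\psi(t)=0$ for all $t\ge\mathbf{s}_1$), so the uniqueness you need is uniqueness of the minimizer on $[0,\|\mathbf{x}\|_\infty]$, which is exactly what strict convexity on that interval gives and is where Theorem \ref{thm:psi} places $\tau$; your argument already uses this correctly, but it deserves a sentence.
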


\begin{proof}
    See Appendix \ref{app:zero_pad_pf}.
\end{proof}

Vanilla network preprocessing is given in Algorithm \ref{alg:vanilla_pre}, where $\ell$ is the maximum vector length in the dataset. The vanilla network architecture is given in Table \ref{tab:vnn_arch}. 
The architecture was chosen to create a fair comparison to the features based network presented previously.    

\begin{algorithm}
\caption{Data preprocessing for vanilla neural network}
\begin{algorithmic}[1]
\Require $\mathbf{x} \in \mathbb{R}^{m}$, $\alpha \geq 0$, $\tau$,  $\ell$
\Ensure $\widehat{\mathbf{x}} \in \mathbb{R}^{m}$, $\widehat{\tau} \in \mathbb{R}$
\If {$\text{length}(\mathbf{x}) < \ell$}
    \State Zero-pad $\mathbf{x}$ so that $\text{length}(\mathbf{x}) = \ell$
\EndIf
\State $\widehat{\mathbf{x}} = |\mathbf{x}|/\alpha$
\If {$\| \widehat{\mathbf{x}} \|_1 \leq 1$}
    \State $\widehat{\mathbf{x}} = \emptyset$, $\widehat{\tau} = \emptyset$  
    \Comment An empty vector and scalar are returned.
\Else
   \State $\widehat{\tau} = \tau/\alpha$
\EndIf
\State \Return $\widehat{\mathbf{x}}$, $\widehat{\tau}$ 
\end{algorithmic}
\label{alg:vanilla_pre}
\end{algorithm}

\begin{table}[h]
    \caption{Vanilla neural network architecture for experiments V1-V6.  $\ell$ is the maximum vector length in the dataset}
    \label{tab:vnn_arch}
    \begin{tabular*}{\textwidth}{l c c c c c}
        \toprule
        \textbf{Layer} & \textbf{Input} & \textbf{1st Hidden} & \textbf{2nd Hidden} & \textbf{3rd Hidden} & \textbf{Output} \\
        \midrule
        \textbf{Number of Neurons} & $\ell$ & 200 & 100 & 50 & 1 \\
        \textbf{Activation Function} & - & ReLU & ReLU & ReLU & - \\
        \botrule
    \end{tabular*}
\end{table}

The vanilla network experiments are structured in the same way and use the same data as the features based network experiments.  The only exceptions are that 1) due to the network size, the training and testing of the network were completed on an NVIDIA A100 SXM4 40GB GPU, and 2) the testing error on $\tau$ (line c) is calculated using $\alpha \widetilde{\tau}$ as the approximation to $\tau$ for an $\mathbf{x}$-$\alpha$-$\tau$ triple.  We will denote these experiments V1, V2, ..., V6 to avoid confusion with experiments 1-6 on the features based network.  The learning curves for experiments V1-V6 are presented in Figure \ref{fig:vnn_lc}.  

\begin{sidewaysfigure}
    \centering
    \includegraphics[width=1.0\textwidth]{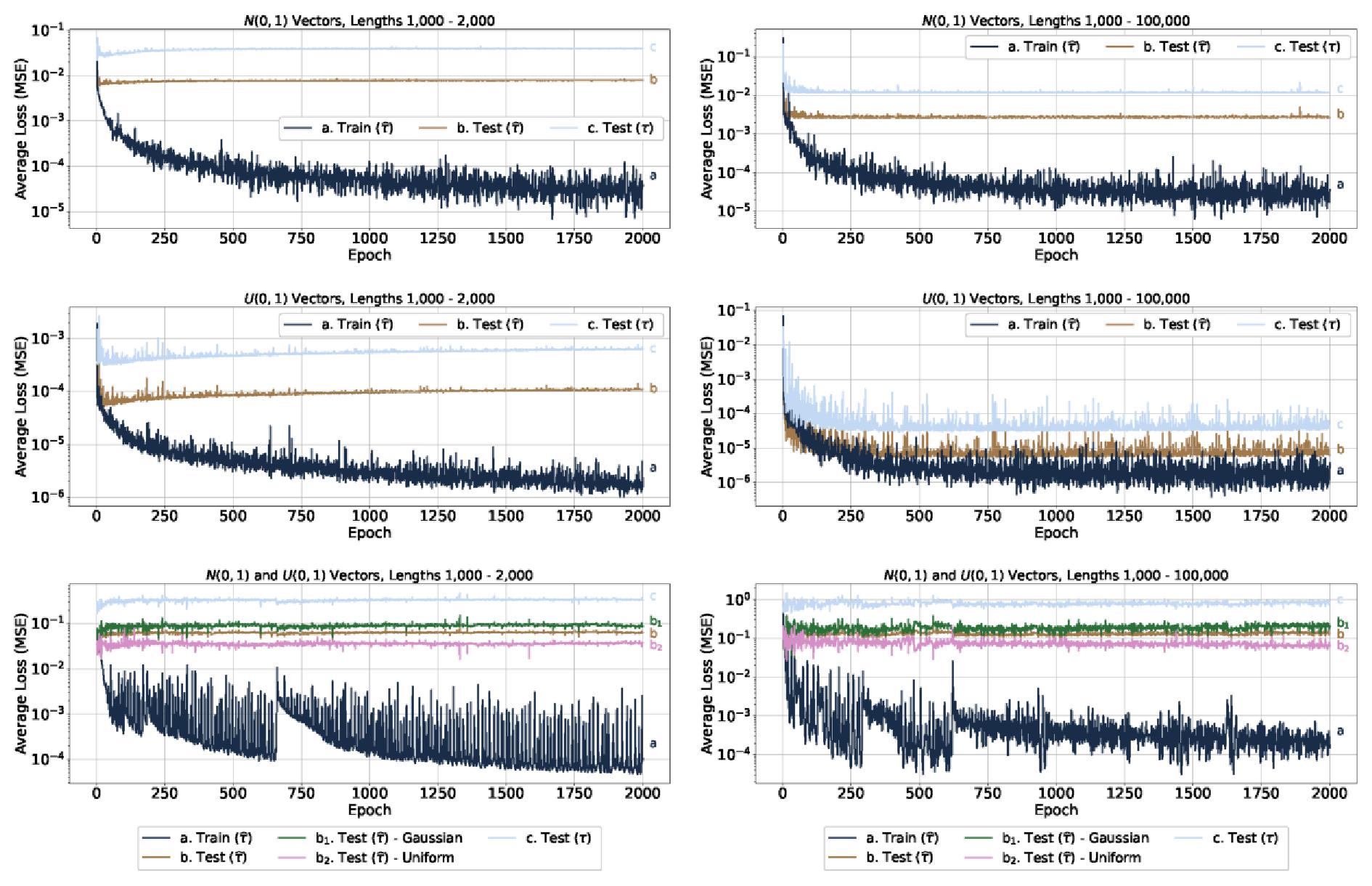}
    \caption{Vanilla neural network learning curves for experiments V1-V6. $\ell=$ 2,000 and $\ell=$ 100,000 for the experiments in the left and right columns, respectively}
    \label{fig:vnn_lc}
\end{sidewaysfigure}

The features based network outperforms the vanilla network.  For $\mathcal{N}(0,1)$ vector data in experiments V1-V2, the $\tau$ testing error is one to two orders of magnitude higher than the features based network ($10^{-2}$ vs. $10^{-4}$ for vector lengths 1,000-2,000 and $10^{-2}$ vs. $10^{-3}$ for vector lengths 1,000-100,000). For $\mathcal{U}(0,1)$ vector data in experiments V3-V4, it is two orders of magnitude higher ($10^{-4}$ vs. $10^{-6}$ for vector lengths 1,000-2,000 and $10^{-5}$ vs. $10^{-7}$ for vector lengths 1,000-100,000).  Similar to the features based network, the vanilla network performs better on $\mathcal{U}(0,1)$ vector data than $\mathcal{N}(0,1)$ vector data. However, in the vanilla network, we see overfitting in experiments V1 and V3 on vectors with a smaller range of lengths. Overfitting was not encountered with the features based network.  

In experiments V5-V6, the vanilla network struggles to learn when both $\mathcal{N}(0,1)$ and $\mathcal{U}(0,1)$ vector data are included in the dataset.  The $\widehat{\tau}$ testing error is one to two orders of magnitude higher for $\mathcal{N}(0,1)$ vector data as compared to experiments V1-V2 ($10^{-2}$ vs. $10^{-3}$ for vector lengths 1,000-2,000 and $10^{-1}$ vs. $10^{-3}$ for vector lengths 1,000-100,000), and three to four orders of magnitude higher for $\mathcal{U}(0,1)$ vector data as compared to experiments V3-V4 ($10^{-2}$ to $10^{-5}$ for vector lengths 1,000-2,000 and $10^{-2}$ to $10^{-6}$ for vector lengths 1,000-100,000).  

\subsubsection{Proximal Operator Error}

In each of experiments 1-6, we saved the features based network that achieved the lowest $\tau$ testing error.  We call these the best models for experiments 1-6.  For each best model, we compute the error in the proximal operator and the objective function (defined below) for each instance in the testing set.  For $\mathbf{x} \in \mathbb{R}^m$, let $\mathbf{p(x)}$ be the exact proximal operator of $\mathbf{x}$, i.e., $[\mathbf{p(x)}]_k = \sigma_{\tau}(\mathbf{x}_k)$, and let $\widetilde{\mathbf{p}}(\mathbf{x})$ be the approximate proximal operator of $\mathbf{x}$ computed using $\widetilde{\tau}$, the output of the features based network, i.e., $[\widetilde{\mathbf{p}}(\mathbf{x})]_k = \sigma_{\alpha(\widetilde{\tau} + \mu)}(\mathbf{x}_k)$. Given $\mathbf{x, y} \in \mathbb{R}^m$, let the objective function $f$ be defined as $f(\mathbf{x}, \mathbf{y}) = \frac{1}{2}||\mathbf{y-x}||_2^2 + \alpha ||\mathbf{y}||_\infty$. Define the proximal operator error, $\delta_{\mathbf{p}}(\mathbf{x})$,  and the objective function error, $\delta_{f}(\mathbf{x})$, as  
\[
    \delta_{\mathbf{p}}(\mathbf{x}) = \frac{\|\mathbf{p}(\mathbf{x})-\widetilde{\mathbf{p}}(\mathbf{x}) \|_2}{\| \mathbf{p}(\mathbf{x}) \|_2},
\] 
and
\[
    \delta_{f}(\mathbf{x}) = \frac{f(\mathbf{x}, \widetilde{\mathbf{p}}(\mathbf{x})) - f(\mathbf{x}, \mathbf{p(x)})}{f(\mathbf{x}, \mathbf{p(x)})}. 
\]
In Table \ref{tab:fnn_err}, we report the median, average, and standard deviation for the proximal operator and objective function errors.  As expected from the learning curves for experiments 1-6, lower median and average $\delta_{\mathbf{p}}$ and $\delta_{f}$ values are achieved for $\mathcal{U}(0,1)$ vector data than $\mathcal{N}(0,1)$ vector data, usually by an order of magnitude, for vectors of lengths 1,000-2,000 and 1,000-100,000.  In addition, median and average $\delta_{\mathbf{p}}$ values for vectors of lengths 1,000-100,000 are less than those for the corresponding vectors of lengths 1,000-2,000. Median and average $\delta_{\mathbf{p}}$ and $\delta_{f}$ values for experiments 5-6 with $\mathcal{N}(0,1)$ and $\mathcal{U}(0,1)$ vector data are higher than those of only $\mathcal{N}(0,1)$ or $\mathcal{U}(0,1)$ vector data in experiments 1-2 and 3-4, respectively.

\begin{table}[h]
    \caption{Median (mdn), average (avg), and standard deviation (sd) of $\delta_{\mathbf{p}}$ and $\delta_{f}$ over the testing set for the best models from experiments 1-6}
    \label{tab:fnn_err}
    \begin{tabular*}{\textwidth}{@{\extracolsep\fill}l@{\hskip0.5\tabcolsep}l@{\hskip0.5\tabcolsep}lcccccc}
        \toprule%
        & & & \multicolumn{3}{@{}c@{}}{$\delta_{\mathbf{p}}$} & \multicolumn{3}{@{}c@{}}{$\delta_{f}$} \\
        \cmidrule{4-6} \cmidrule{7-9}%
        \textbf{Ex} & \textbf{Data} & \textbf{Vector Len} & \textbf{mdn} & \textbf{avg} & \textbf{sd} & \textbf{mdn} & \textbf{avg} & \textbf{sd} \\
        \midrule
        1 & $\mathcal{N}(0,1)$ & 1,000 - 2,000 & 1.5e-3 & 1.8e-3 & 1.5e-3 & 1.7e-4 & 4.1e-4 & 6.8e-4  \\
        2 & $\mathcal{N}(0,1)$ & 1,000 - 100,000 & 3.7e-4 & 5.4e-4 & 7.5e-4 & 2.4e-4 & 5.6e-4 & 9.8e-4  \\
        3 & $\mathcal{U}(0,1)$ & 1,000 - 2,000 & 4.8e-4 & 5.8e-4 & 4.7e-4 & 1.8e-5 & 4.4e-5 & 7.3e-5  \\
        4 & $\mathcal{U}(0,1)$ & 1,000 - 100,000 & 7.1e-5 & 1.2e-4 & 1.9e-4 & 1.3e-5 & 2.9e-5 & 4.6e-5  \\
        5 & $\mathcal{N}(0,1)$ \& $\mathcal{U}(0,1)$ & 1,000 - 2,000 & 1.6e-3 & 1.9e-3 & 1.5e-3 & 2.0e-4 & 4.5e-4 & 7.1e-4  \\
        6 & $\mathcal{N}(0,1)$ \& $\mathcal{U}(0,1)$ & 1,000 - 100,000 & 7.5e-4 & 1.1e-3 & 1.3e-3 & 2.7e-3  & 4.1e-3 & 4.1e-3  \\
        \botrule
    \end{tabular*}
\end{table}

\subsubsection{Feature Importance}

We used the best models from experiments 1-6 to measure feature importance by computing saliency for each feature, i.e., the gradient of the network output with respect to each feature.  A feature with a higher saliency is considered more important to the network, since a small change in the value of the feature will result in a larger change in the output of the network. 
 For each best model we computed the average saliency of each feature using the testing set feature vectors, i.e., the $\mathbf{w}$ vectors that resulted from inputting the testing set data vectors into Algorithm \ref{alg:features_pre}.  The results are presented in Figure \ref{fig:fnn_fi}.  

\begin{figure}[h]
    \centering
    \includegraphics[width=1.0\textwidth]{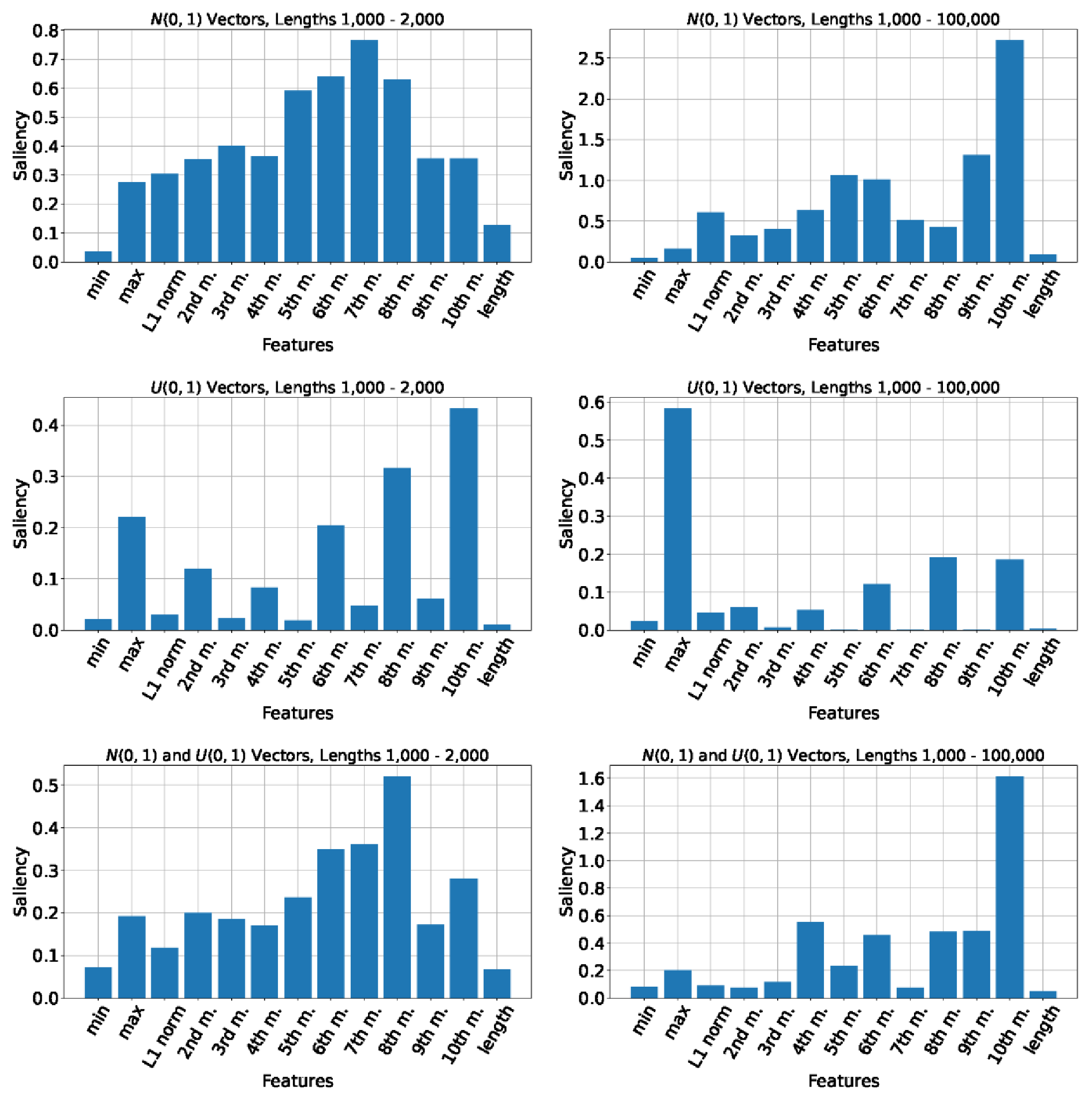}
    \caption{Feature importances for the best models from experiments 1-6}
    \label{fig:fnn_fi}
\end{figure}

For $\mathcal{N}(0,1)$ vectors of lengths 1,000-2,000, the 5th-8th moments are relatively more important than other features; however, for $\mathcal{N}(0,1)$ vectors of lengths 1,000-100,000, the 10th moment is by far the most important feature.  For $\mathcal{N}(0,1)$ vector data of both length ranges, the minimum is relatively unimportant.  For $\mathcal{U}(0,1)$ vectors of lengths 1,000-2,000, the 10th moment is the most important feature, but the maximum, 6th and 8th moments are also relatively important.  This is in contrast to $\mathcal{U}(0,1)$ vectors of lengths 1,000-100,000 in which the maximum is by far the most important feature.  For $\mathcal{U}(0,1)$ vector data of both length ranges, the even moments are more important than the odd moments, the minimum, and the log of the length.   

Experiments 5-6 with $\mathcal{N}(0,1)$ and $\mathcal{U}(0,1)$ vector data exhibit patterns that are fairly similar to those in experiments 1-2 with $\mathcal{N}(0,1)$ vector data only.  However, the 8th moment is the most important feature for $\mathcal{N}(0,1)$ and $\mathcal{U}(0,1)$ vectors with lengths 1,000-2,000, as opposed to the 7th moment for $\mathcal{N}(0,1)$ vectors of lengths 1,000-2,000.

\subsubsection{Computational Efficiency}

Table \ref{tab:fnn_times} gives the average times to compute 1) an approximate proximal operator using the features based network and 2) an exact proximal operator using Algorithm \ref{alg:base_min}.  The features based network approach is divided into preprocessing, which includes feature generation, i.e., Algorithm \ref{alg:features_pre} without Line 16 (Line 16 computes a scaled, centered version of $\tau$); inference; and proximal operator computation time (given $\widetilde{\tau}$).  The times are averaged over 10,000 vectors of the given length. The lowest average time between the approximate and exact proximal operator is given in bold.  For each vector length in Table \ref{tab:fnn_times}, we used 5,000 $\mathcal{N}(0,1)$ vectors and 5,000 $\mathcal{U}(0,1)$ vectors of the given length. The best model from experiment 5 was used for inference for vectors of length 1,000 and the best model from experiment 6 was used for inference for vectors of lengths 10,000 and 100,000.  All calculations are double precision except the inference step of the features based network approach, which is single precision.  The preprocessing and proximal operator computations for the features based network approach and the exact proximal operator computations operate on one vector at a time via a loop.  Since these are embarrassingly parallel computations, we used the numba package and jit function decorator in Python as a speed up.  

\begin{table}[h]
    \caption{Average times (seconds) to compute 1) an approximate proximal operator using the features based network and 2) an exact proximal operator using Algorithm \ref{alg:base_min}.  Averages are computed over 10,000 vectors of the given length}
    \label{tab:fnn_times}
    \begin{tabular*}{\textwidth}{@{\extracolsep\fill}lccccc}
        \toprule%
         & \multicolumn{4}{@{}c@{}}{\textbf{Features Based Network Avg Time}}  & \multicolumn{1}{@{}c@{}}{\textbf{Exact Avg Time}} \\ 
        \cmidrule{2-5} \cmidrule{6-6}%
         \textbf{Vector Len} & \textbf{Preprocessing} & \textbf{NN Inf} & \textbf{Prox Op} & \textbf{Total} & \textbf{Total} \\
        \midrule
        1,000 & 2.6e-6 & 4.5e-6 & 1.2e-6 & 8.2e-6 & \textbf{7.8e-6} \\
        10,000 & 2.7e-5 & 4.6e-6 & 8.9e-6 & \textbf{4.1e-5} & 1.0e-4 \\
        100,000 & 2.6e-4 & 9.3e-6 & 1.2e-4 & \textbf{3.9e-4} & 1.4e-3 \\
        \botrule
    \end{tabular*}
\end{table}

As vector length increases, the times for the features based network and exact approaches increase, and the approximate proximal operator is computed faster than the exact proximal operator by an order of magnitude.  Inference time is similar for all vector lengths since the network depends on the number of moments calculated for each vector, not vector length.  The results in Table \ref{tab:fnn_times} are consistent with the fact that for $\mathbf{x} \in \mathbb{R}^m$, the features based network approach has complexity $O(m)$ and the exact approach in Algorithm \ref{alg:base_min} has complexity $O(m \log m)$.  For the features based network approach, the constant for the $O(m)$ complexity depends on the number of moments chosen to compute, which motivates selecting a small value for the number of moments.

\section{Conclusion}
\label{sec:conc}

We theoretically developed an approach to compute $\textbf{prox}_{\alpha ||\cdot||_\infty}(\mathbf{x})$, presented the corresponding $O(m \log m)$ algorithm, and related it to the approach to compute $\textbf{prox}_{\alpha ||\cdot||_\infty}(\mathbf{x})$ that uses the Moreau decomposition.  We then developed a neural network to approximate $\tau$, the threshold calculated in the computation of $\textbf{prox}_{\alpha ||\cdot||_\infty}(\mathbf{x})$, using the exact approach to compute $\textbf{prox}_{\alpha ||\cdot||_\infty}(\mathbf{x})$ as motivation for data preprocessing.  The network is novel in that it can accept vectors of varying lengths due to a feature selection process based on the moments of the input data.  We showed through numerical experiments that the network can efficiently learn $\tau$, and does so with better performance than a vanilla network that naively approximates $\tau$ without using a feature selection process.  Hence, we can use the network to approximate $\textbf{prox}_{\alpha ||\cdot||_\infty}(\mathbf{x})$ in $O(m)$ complexity.  Two potential areas for future work are 1) further investigation into the difference of network performance for various vector data distributions, and 2) performance assessment of using the network approximation of the proximal operator in applications as compared to using the exact proximal operator.  

\backmatter

\bmhead{Acknowledgements}
The authors acknowledge Research Computing at The University of Virginia for providing computational resources that have contributed to the results reported within this paper (URL: \url{https://rc.virginia.edu}).  

\bmhead{Funding} The second author has been supported in part by the National Science Foundation under grant DMS-2108900 and by the Simons Foundation.

\bmhead{Data Availability Statement} The data generated for experiments in this manuscript can be reproduced using code available at \url{https://github.com/klinehan1/prox_op_nn}. 

\section*{Declarations}

\bmhead{Conflict of interest} On behalf of all authors, the corresponding author states that there is no conflict of interest.

\bmhead{Code availability} Experiment code is available at \url{https://github.com/klinehan1/prox_op_nn}. 

\begin{appendices}
\setcounter{figure}{3}
\setcounter{table}{5}
\renewcommand\thefigure{\arabic{figure}}
\renewcommand\thetable{\arabic{table}}

\section{Proof of Theorem \ref{thm:psi}}
\label{app:psi_pf}

\psiThm*

We begin by providing a lemma.

\begin{lemma}
\label{lemma:psi_cont}
    Suppose the assumptions from Theorem \ref{thm:psi} are true.  Then $\psi$ is continuous over $t \geq 0$.
\end{lemma}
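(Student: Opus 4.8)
The plan is to rewrite the apparently piecewise expression for $\psi$ in a manifestly continuous form, so that the jumps in the index set $I_t$ become irrelevant. The key observation I would establish first is the pointwise identity
\[
\bigl(\sigma_t(\mathbf{x}_k) - \mathbf{x}_k\bigr)^2 = \bigl(\max\{|\mathbf{x}_k| - t,\, 0\}\bigr)^2 \qquad \text{for all } k \in [m],\ t \geq 0,
\]
which follows from a short three-case check against the definition of $\sigma_t$ in Equation \ref{eqn:prox_k}: when $|\mathbf{x}_k| < t$ both sides are $0$, and in the cases $\mathbf{x}_k \geq t \geq 0$ and $\mathbf{x}_k \leq -t \leq 0$ one has $|\sigma_t(\mathbf{x}_k) - \mathbf{x}_k| = |\mathbf{x}_k| - t \geq 0$. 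This identity also confirms that the two formulas for $\psi(t)$ written in Theorem \ref{thm:psi} coincide, since any summand with $|\mathbf{x}_k| = t$ contributes $0$ regardless of whether it is counted in $I_t$.

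With this in hand, I would write
\[
\psi(t) = \frac{1}{2}\sum_{k=1}^m \bigl(\max\{|\mathbf{x}_k| - t,\, 0\}\bigr)^2 + \alpha t,
\]
and conclude continuity on $t \geq 0$ from standard facts: each map $t \mapsto \max\{|\mathbf{x}_k| - t, 0\}$ is continuous, being the pointwise maximum of two affine functions; squaring preserves continuity; a finite sum of continuous functions is continuous; and $t \mapsto \alpha t$ is continuous. Hence $\psi$, as a finite sum of continuous functions, is continuous on $t \geq 0$.

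I do not expect any genuine obstacle here: the entire content of the lemma is the bookkeeping remark that $I_t$ changes discontinuously as $t$ sweeps past the sorted values $\mathbf{s}_1 \geq \cdots \geq \mathbf{s}_m$ while $\psi$ itself does not, and isolating it as a separate lemma keeps the proof of Theorem \ref{thm:psi} clean. The only point requiring a little care is the boundary behavior at values of $t$ equal to some $\mathbf{s}_i$ (and at $t = 0$, where $I_0 = [m]$), and the $\max$-reformulation handles this automatically.
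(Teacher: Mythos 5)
Your proof is correct, but it takes a genuinely different route from the paper's. The paper proves the lemma by brute force on the piecewise structure: it observes that $\psi$ is linear on $t > \mathbf{s}_1$ and quadratic on each open interval $(\mathbf{s}_{i+1}, \mathbf{s}_i)$, and then explicitly matches the one-sided limits at each breakpoint $t = \mathbf{s}_i$ (with separate treatment of $t = \mathbf{s}_1$, $t = 0$, and interior breakpoints, using the set $S_i = \{k : \mathbf{s}_k = \mathbf{s}_i\}$ to handle ties). Your approach instead collapses the case analysis into the single global formula $\psi(t) = \tfrac{1}{2}\sum_{k=1}^m \bigl(\max\{|\mathbf{x}_k| - t, 0\}\bigr)^2 + \alpha t$, whose continuity is immediate; the identity $\bigl(\sigma_t(\mathbf{x}_k) - \mathbf{x}_k\bigr)^2 = \bigl(\max\{|\mathbf{x}_k| - t, 0\}\bigr)^2$ checks out in all three cases for $t \geq 0$, and it correctly absorbs the ambiguity of whether indices with $|\mathbf{x}_k| = t$ are counted in $I_t$. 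Your argument is shorter and handles repeated values of $\mathbf{s}$ and the endpoint $t = 0$ with no extra work; what the paper's longer computation buys is the explicit interval-by-interval quadratic expressions and the limit bookkeeping that are then reused almost verbatim in the $\mathcal{C}^1$ part of the proof of Theorem \ref{thm:psi}, where one-sided derivatives at the breakpoints genuinely must be compared. (Your reformulation could serve there too, since $t \mapsto \max\{u - t, 0\}^2$ is $\mathcal{C}^1$ in $t$, but that is beyond the scope of this lemma.)
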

\begin{proof}
     We first show that $\psi$ is piecewise continuous. Suppose $t > \mathbf{s}_1$.  Then $\psi(t) = \alpha t$ is a linear function.  In addition, for each index $i \in [m]$ such that $\mathbf{s}_{i+1} < t < \mathbf{s}_{i}$, $|I_t| = i$ and $\psi$ is a quadratic.  Hence $\psi$ is piecewise continuous.  Now, suppose $t = \mathbf{s}_1$. Then 
    \begin{equation*}
        \lim_{t \rightarrow \mathbf{s}_1^+} \psi(t) = \lim_{t \rightarrow \mathbf{s}_1^+} \left[ \alpha t \right] = \psi(\mathbf{s}_1) 
        = \lim_{t \rightarrow \mathbf{s}_1^-} \left[ \frac{1}{2}\sum_{\substack{k: \\ \mathbf{s}_k = \mathbf{s}_1}}(\mathbf{s}_k - t)^2 + \alpha t \right] = \lim_{t \rightarrow \mathbf{s}_1^-} \psi(t).
    \end{equation*}
    Similarly for $t = \mathbf{s}_{m+1} = 0$, 
    \[
         \lim_{t \rightarrow \mathbf{s}_{m+1}^+} \psi(t) = \lim_{t \rightarrow \mathbf{s}_{m+1}^+} \left[ \frac{1}{2}\sum_{k=1}^{m} (\mathbf{s}_k - t)^2 + \alpha t \right] = \psi(\mathbf{s}_{m+1}). 
    \]
    Next, suppose $t = \mathbf{s}_i$ for some $i \in [m]\setminus \{1\}$.  Let $S_i = \{k:\mathbf{s}_k = \mathbf{s}_i\}$.  Then
    \begin{equation*}
        \begin{split}
            \lim_{t \rightarrow \mathbf{s}_i^+} \psi(t) & = \lim_{t \rightarrow \mathbf{s}_i^+}\left[ \frac{1}{2} \sum_{k \in [i] \setminus S_i} (\mathbf{s}_k - t)^2  + \alpha t \right] = \psi(\mathbf{s}_i) \\ 
            & = 
            \lim_{t \rightarrow \mathbf{s}_i^-}\left[ \frac{1}{2}\sum_{k \in [i] \cup S_i} (\mathbf{s}_k - t)^2 + \alpha t \right]= \lim_{t \rightarrow \mathbf{s}_i^-} \psi(t).
        \end{split}
    \end{equation*}
    Thus, $\psi$ is continuous over $t \geq 0$.
\end{proof}

\begin{proof}[Proof of Theorem \ref{thm:psi}] 
We will prove the four results from the theorem separately. \newline

\noindent \textit{$\psi$ is $\mathcal{C}^1$ over $t > 0$:} By Lemma \ref{lemma:psi_cont}, $\psi$ is continuous, linear over $t > \mathbf{s}_1$, and quadratic over $\mathbf{s}_{k+1} < t < \mathbf{s}_{k}$ for $k \in [m]$ such that $\mathbf{s}_{k+1} \neq \mathbf{s}_{k}$.  Hence $\psi$ is $\mathcal{C}^1$ on $t > \mathbf{s}_1$, and $\mathbf{s}_{k+1} < t < \mathbf{s}_{k}$  for $k \in [m]$ such that $\mathbf{s}_{k+1} \neq \mathbf{s}_{k}$.  We now turn our attention to the points $t = \mathbf{s}_k$ for $k \in [m]$.
    Suppose $t = \mathbf{s}_1$. Then 
    \begin{equation*}
        \lim_{t \rightarrow \mathbf{s}_1^+} \psi'(t)  = \lim_{t \rightarrow \mathbf{s}_1^+} \alpha = \alpha = 
           \lim_{t \rightarrow \mathbf{s}_1^-}\left[-\sum_{\substack{k: \\ \mathbf{s}_k = \mathbf{s}_1}} (\mathbf{s}_k - t) + \alpha \right] =  \lim_{t \rightarrow \mathbf{s}_1^-} \psi'(t),
    \end{equation*}
    and thus $\psi'(\mathbf{s}_1) = \alpha$.
    Next, fix an index $i \in [m]\setminus \{1\}$ and suppose $t = \mathbf{s}_i$.   Let $S_i = \{k:\mathbf{s}_k = \mathbf{s}_i\}$. Then
    \begin{equation*}
        \lim_{t \rightarrow \mathbf{s}_i^+} \psi'(t)  = \lim_{t \rightarrow \mathbf{s}_i^+}\left[-\sum_{k \in [i] \setminus S_i} (\mathbf{s}_k - t) + \alpha \right] = 
           \lim_{t \rightarrow \mathbf{s}_i^-}\left[ -\sum_{k \in [i] \cup S_i} (\mathbf{s}_k - t) + \alpha \right] =  
           \lim_{t \rightarrow \mathbf{s}_i^-} \psi'(t), 
    \end{equation*}
    and thus $\psi'(\mathbf{s}_i) = -\sum_{k \in [i] \setminus S_i} (\mathbf{s}_k - \mathbf{s}_i) + \alpha$.  Hence $\psi$ is $\mathcal{C}^1$ over $t>0$. \newline

    \noindent \textit{$\psi$ is strictly convex over $0 \leq t \leq ||\mathbf{x}||_\infty = \mathbf{s}_1$:} We will show that $\psi'$ is strictly increasing over $0 < t < \mathbf{s}_1$ thus proving the result \cite[Theorems 12A, 12B]{roberts_varberg_1973}.  Let $0 < t_1 < t_2 < \mathbf{s}_1$. This guarantees that $|I_{t_1}| >0$ and $|I_{t_2} | > 0$. We proceed using two cases. \newline 
    Case 1: Suppose $t_1$ and $t_2$ are such that $I_{t_2} = I_{t_1}$.  Then 
    \[
        \psi'(t_1) = |I_{t_1}|t_1 - \sum_{k \in I_{t_1}} |\mathbf{x}_k | + \alpha < |I_{t_2}| t_2 - \sum_{k \in I_{t_2}} |\mathbf{x}_k | + \alpha = \psi'(t_2),
    \]   
    hence $\psi'$ is strictly increasing. \newline
    Case 2: Suppose $t_1$ and $t_2$ are such that $I_{t_2} \subset I_{t_1}$.  Then $t_1 \leq |\mathbf{x}_k|$ for $k \in I_{t_1} - I_{t_2}$ and 
    \[
        |I_{t_1} - I_{t_2}|t_1 - \sum_{k \in I_{t_1} - I_{t_2}} |\mathbf{x}_k | \leq 0.
    \]
    Since
    \begin{equation*}
        \begin{split}
             |I_{t_2}| t_1 & < |I_{t_2}| t_2  \Rightarrow 
             |I_{t_2}| t_1 + |I_{t_1} - I_{t_2}|t_1 - \sum_{k \in I_{t_1} - I_{t_2}} |\mathbf{x}_k | < |I_{t_2}| t_2  \\ & \Rightarrow 
             (|I_{t_2}| + |I_{t_1} - I_{t_2}|)t_1 - \left( \sum_{k \in I_{t_2}} |\mathbf{x}_k | + \sum_{k \in I_{t_1} - I_{t_2}} |\mathbf{x}_k | \right) + \alpha < |I_{t_2}| t_2 - \sum_{k \in I_{t_2}} |\mathbf{x}_k | + \alpha \\ & \Rightarrow 
             |I_{t_1}|t_1 - \sum_{k \in I_{t_1}} |\mathbf{x}_k | + \alpha < |I_{t_2}| t_2 - \sum_{k \in I_{t_2}} |\mathbf{x}_k | + \alpha \\ 
             & \Rightarrow 
             \psi'(t_1) < \psi'(t_2),
        \end{split}
    \end{equation*}
    $\psi'$ is strictly increasing. \newline

    \noindent \textit{$\tau \leq ||\mathbf{x}||_\infty = \mathbf{s}_1$:} Since $\textbf{prox}_{\alpha ||\cdot||_\infty}(\mathbf{0}) = \mathbf{0}$, it is clear that if $\mathbf{x} = \mathbf{0}$, then $\tau = 0 = \mathbf{s}_1$.  
    If $\mathbf{x} \neq \mathbf{0}$ and $\alpha>0$, then $\psi'(t) = \alpha > 0$ for $t \geq \mathbf{s}_1$.  Thus $\tau < \mathbf{s}_1 = ||\mathbf{x}||_\infty$.  If $\mathbf{x} \neq \mathbf{0}$ and $\alpha = 0$, it is clear that $\textbf{prox}_{\alpha ||\cdot||_\infty}(\mathbf{x}) = \mathbf{x}$ and the smallest $\tau$ that would guarantee this is $\tau = \mathbf{s}_1$.  \newline

    \noindent \textit{$\tau = 0$ if and only if $|| \mathbf{x} ||_1 \leq \alpha$:} We will prove that if $\tau = 0$, then $|| \mathbf{x} ||_1 \leq \alpha$ by contradiction.  Suppose $\tau=0$ and $|| \mathbf{x} ||_1 > \alpha$.  Then $\psi'(0) = -\| \mathbf{x} \|_1 + \alpha < 0$.  In addition, $\psi'(\mathbf{s}_1) = \alpha \geq 0$.  So, by the fact that $\psi$ is $\mathcal{C}^1$ over $t > 0$ and the Intermediate Value Theorem, there exists a $0 < t \leq \mathbf{s}_1$ such that $\psi'(t)=0$. Since $\psi$ is strictly convex over $0 \leq t \leq ||\mathbf{x}||_\infty = \mathbf{s}_1$, this contradicts $\tau=0$.  \newline 
    Now suppose $|| \mathbf{x} ||_1 \leq \alpha$.  Then $\psi'(0) = -\| \mathbf{x} \|_1 + \alpha \geq 0$.  Since $\psi$ is strictly convex over $0 \leq t \leq ||\mathbf{x}||_\infty = \mathbf{s}_1$, $\tau=0$.   
    
\end{proof}

\section{Divide and Conquer Algorithm}
\label{app:dc_alg}

\begin{algorithm}
\caption{Linear time computation of the proximal operator of the $\ell_\infty$ norm}
\begin{algorithmic}[1]
\Require $\mathbf{x} \in \mathbb{R}^{m}$, $\alpha \geq 0$
\Ensure $\textbf{prox}_{\alpha ||\cdot||_\infty}(\mathbf{x}) \in \mathbb{R}^{m}$ 
\If {$||\mathbf{x}||_1 \leq \alpha$}
    \State $\textbf{prox}_{\alpha ||\cdot||_\infty}(\mathbf{x}) = \mathbf{0}$
\Else
    \State $\mathbf{\widehat{x}} = \{ |\mathbf{x}_{i}| \: |\:  \mathbf{x}_{i} \neq 0\}$ 
    \State $\boldsymbol\ell = \emptyset$, $\mathbf{u} = \emptyset$, $\boldsymbol\ell^{\text{max}} = 0$
    \State $\widetilde{\nu} = 0$, $n_{\widetilde{\nu}} = 0$  
    \While{$\text{length}(\mathbf{\widehat{x}}) \, \neq 0$} 
        \Comment Find $\boldsymbol\ell^{\text{max}}$ and $\mathbf{u}^{\text{min}}$
        \State Randomly select a pivot, $p$, from the elements of $\mathbf{\widehat{x}}$.
        \State \parbox[t]{\dimexpr\textwidth-\leftmargin-\labelsep-\labelwidth}{Partition $\mathbf{\widehat{x}}$ such that $\boldsymbol\ell = \{ \mathbf{\widehat{x}}_{i}|\:  \mathbf{\widehat{x}}_{i} < p\}$ and $\mathbf{u} = \{ \mathbf{\widehat{x}}_{i}|\:  \mathbf{\widehat{x}}_{i} > p\}$. 
 Let $n_{\nu} = \text{length}(\mathbf{u})$, $\nu = \sum_{k=1}^{n_{\nu}}\mathbf{u}_k$, and $n_p = \text{length}(\{ \mathbf{\widehat{x}}_{i}|\: \mathbf{\widehat{x}}_{i} = p\})$.\strut}
        \State $\psi'(p) = -(\widetilde{\nu}  + \nu) + (n_{\widetilde{\nu}} + n_\nu)p + \alpha$
        \Comment $\psi'(p) = -\sum_{k \in I_p} |\mathbf{x}_k| +  |I_{p}|p + \alpha$
        \If{$\psi'(p) < 0$}
            \State $\boldsymbol\ell^{\text{max}} = p$
            \State $\mathbf{\widehat{x}} = \mathbf{u}$
        \Else 
            \State $\mathbf{u}^{\text{min}} = p$
            \State $\mathbf{\widehat{x}} = \boldsymbol\ell$
            \State $\widetilde{\nu} = \widetilde{\nu} + \nu +pn_p$
            \State $n_{\widetilde{\nu}} = n_{\widetilde{\nu}} + n_{\nu} + n_p$
        \EndIf
    \EndWhile
    \State $t_0 = (\widetilde{\nu}-\alpha)/n_{\widetilde{\nu}} $
    \Comment Find the minimum of $\psi(t)$ 
    \If{$\boldsymbol\ell^{\text{max}} < t_0 \leq \mathbf{u}^{\text{min}}$}
        \State $\tau = t_0$
    \EndIf
    \State $\forall k \in [m]$ compute $[\textbf{prox}_{\alpha ||\cdot||_\infty}(\mathbf{x})]_k = \sigma_{\tau}(\mathbf{x}_k)$    
\EndIf
\State \Return $\textbf{prox}_{\alpha ||\cdot||_\infty}(\mathbf{x})$  
\end{algorithmic}
\label{alg:eff_min}
\end{algorithm}

Similar to the motivation for the expected linear time algorithm of \cite{duchi_etal_2008} for projection onto the simplex, the motivation for Algorithm \ref{alg:eff_min} is that we actually do not need a full sort of the input vector to find $\tau$; we only need to know the sum of the elements in the set $I_{\tau}$ and $|I_\tau|$. As in Algorithm \ref{alg:base_min}, the main iteration finds a $0 < t \leq \mathbf{s}_1$ such that $\psi'(t) = 0$.  Let $\mathbf{\widehat{x}} = \{ |\mathbf{x}_{i}| \: |\:  \mathbf{x}_{i} \neq 0\}$.  In each while loop iteration of Algorithm \ref{alg:eff_min} we choose an element of $\widehat{\mathbf{x}}$ (or a subset of $\widehat{\mathbf{x}}$) as the pivot, $p$, and partition $\widehat{\mathbf{x}}$ (or a subset of $\widehat{\mathbf{x}}$) by $p$.  We use the strict convexity of $\psi$ and the sign of $\psi'(p)$ to determine in which subset of $\widehat{\mathbf{x}}$ to continue searching for $\tau$. After execution of the while loop, there exists a $k \in [m]$ such that $\boldsymbol \ell^{\text{max}} = \mathbf{s}_{k+1}$ and $\mathbf{u}^{\text{min}} = \mathbf{s}_k$ for which $\boldsymbol \ell^{\text{max}} < \tau \leq \mathbf{u}^{\text{min}}$. At this point  we are able to calculate $\tau$ since the number of elements greater than or equal to $\mathbf{u}^{\min}$ and the sum of these elements have been updated in each iteration of the while loop.  Note that this is equivalent to how $\tau$ is computed in Algorithm \ref{alg:base_min}; the difference is that in Algorithm \ref{alg:base_min} we arrive at the interval $[\mathbf{s}_{k+1}, \mathbf{s}_k)$ by a linear search and in Algorithm \ref{alg:eff_min} we use a divide and conquer technique.  

The complexity of Algorithm \ref{alg:eff_min} is based on the pivot selected in line 8.  The expected complexity is $O(m)$, but the worst case complexity is $O(m^2)$.  We note that an $O(m)$ complexity can be achieved if the median of $\widehat{\mathbf{x}}$ is chosen as the pivot; however, this is not practical since finding the median of a vector in $O(m)$ time is slow \cite{condat_2016}. 

\section{Proof of Theorem \ref{thm:same_t}}
\label{app:samet_pf}

\sametThm*

We provide a helpful lemma before proving the theorem.

\begin{lemma}
\label{lem:repeat_t}
For $\alpha > 0$, let $\mathbf{x} \in \mathbb{R}^m$ be such that $||\mathbf{x}||_1 > \alpha$, and $\tau$ be calculated as in Algorithm \ref{alg:base_min}.  If $\tau = |\mathbf{x}_r| = \mathbf{s}_r$ and $\mathbf{s}_r$ appears $j$ times in $\mathbf{s}$, i.e., $\mathbf{s}_r = \mathbf{s}_{r-1} = \cdots = \mathbf{s}_{r-(j-1)}$, then 
\[
\tau = \left( \sum_{k=1}^{r} \mathbf{s}_k - \alpha \right)/r = \left( \sum_{k=1}^{r-1} \mathbf{s}_k - \alpha \right)/(r-1) = \cdots = \left( \sum_{k=1}^{r-j} \mathbf{s}_k - \alpha \right)/(r-j).
\]   
\end{lemma}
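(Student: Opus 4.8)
The plan is to analyze what happens in the main \texttt{while} loop of Algorithm \ref{alg:base_min} when it terminates with $\tau = \mathbf{s}_r$, keeping careful track of the inner loop that handles repeated elements. The key observation is that $\tau$ satisfies the defining equation $\psi'(\tau) = 0$, i.e., $\tau = \left(\sum_{k \in I_\tau} |\mathbf{x}_k| - \alpha\right)/|I_\tau|$, and when $\tau = \mathbf{s}_r = \mathbf{s}_{r-1} = \cdots = \mathbf{s}_{r-(j-1)}$ there is genuine ambiguity in how large $I_\tau$ is: the index set $\{k : |\mathbf{x}_k| \geq \tau\}$ could be taken to have any cardinality from $r-j$ up to $r$ since the entries $\mathbf{s}_{r-(j-1)}, \ldots, \mathbf{s}_r$ all equal $\tau$. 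So the statement is really the claim that the value $\left(\sum_{k=1}^{i}\mathbf{s}_k - \alpha\right)/i$ is constant as $i$ ranges over $\{r-j, r-j+1, \ldots, r\}$.

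First I would reduce the chain of $j+1$ equalities to a single elementary fact: if $a/b = \tau$ and we form $(a + \tau)/(b+1)$, then this also equals $\tau$. Concretely, write $A_i = \sum_{k=1}^i \mathbf{s}_k$. I want to show $A_i - \alpha = \tau \cdot i$ for every $i \in \{r-j, \ldots, r\}$. I would establish it first for $i = r$, namely $\tau = (A_r - \alpha)/r$, directly from the termination condition of Algorithm \ref{alg:base_min}: the algorithm sets $\tau = t_0 = (\gamma - \alpha)/i$ where, after the inner \texttt{while} loop that absorbs all copies of $\mathbf{s}_r$, the accumulator $\gamma$ equals $\sum_{k=1}^{i}\mathbf{s}_k$ with the running index $i$ now equal to $r$ (since all $j$ copies $\mathbf{s}_{r-(j-1)}, \ldots, \mathbf{s}_r$ have been counted). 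Then, since $\mathbf{s}_{r-1} = \cdots = \mathbf{s}_{r-(j-1)} = \mathbf{s}_r = \tau$, I get $A_{r-1} = A_r - \mathbf{s}_r = A_r - \tau$, so $(A_{r-1} - \alpha)/(r-1) = (A_r - \alpha - \tau)/(r-1) = (\tau r - \tau)/(r-1) = \tau$. Iterating this descent $j$ times peels off one copy of $\tau$ at each stage and preserves the ratio, giving the full chain down to $(A_{r-j} - \alpha)/(r-j) = \tau$.

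The one place that needs care — and what I expect to be the main obstacle — is justifying that the algorithm's accumulator and index really are $\gamma = A_r$ and $i = r$ at the moment $\tau$ is assigned, i.e., that the inner \texttt{while} loop on repeated elements behaves as the comment claims and that the outer loop does not break prematurely on some earlier index $i' < r-j$. The latter is handled by the strict convexity of $\psi$ from Theorem \ref{thm:psi}: there is \emph{at most one} $t \in (0, \mathbf{s}_1]$ with $\psi'(t) = 0$, so the \texttt{if} condition $\mathbf{s}_{i+1} < t_0 \leq \mathbf{s}_i$ cannot be satisfied before the block containing $\tau$ is reached; hence when the algorithm does break, the running index has been advanced (via the inner loop and the line $i = i + (j-1)$) precisely to $r$. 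I would spell out that bookkeeping carefully, perhaps noting that if $\tau$ lands strictly inside an interval $(\mathbf{s}_{r+1}, \mathbf{s}_r)$ with $j = 1$ the statement is vacuous, and the substance is entirely in the repeated-element case. Once the indexing is pinned down, the rest is the short algebraic descent described above.
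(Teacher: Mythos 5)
Your proposal is correct and follows essentially the same route as the paper's proof: establish $\tau = \left(\sum_{k=1}^{r}\mathbf{s}_k - \alpha\right)/r$ from the algorithm's termination, then peel off one copy of $\mathbf{s}_r = \tau$ at a time via the identity $(A_r - \alpha - \tau)/(r-1) = \tau$, which the paper packages as an induction over $i \in [j]$. Your extra care about the accumulator bookkeeping and the strict-convexity argument that the loop cannot break early is a reasonable elaboration of what the paper simply asserts with ``By Algorithm \ref{alg:base_min}.''
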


\begin{proof}
    By Algorithm \ref{alg:base_min}, $\tau = (\sum_{k=1}^{r} \mathbf{s}_k - \alpha)/r $.  To show the next part of the result, we use induction.  Let $i=1$.  Then,
    \[
        \mathbf{s}_r = \tau = \left( \sum_{k=1}^{r} \mathbf{s}_k - \alpha \right)/r \Leftrightarrow 
        \left( \sum_{k=1}^{r-1} \mathbf{s}_k - \alpha \right)/(r-1) = \mathbf{s}_r = \mathbf{s}_{r-1} = \tau.
    \]
    Now, assume that $\tau = (\sum_{k=1}^{r} \mathbf{s}_k - \alpha)/r = (\sum_{k=1}^{r-i} \mathbf{s}_k - \alpha)/(r-i)$ for some $i \in [j-1]$.  Then,
    \[
        \tau = \left( \sum_{k=1}^{r-i} \mathbf{s}_k - \alpha \right)/(r-i) = \mathbf{s}_{r-i} \Leftrightarrow 
        \left( \sum_{k=1}^{r-(i+1)} \mathbf{s}_k - \alpha \right)/(r-(i+1)) =  \mathbf{s}_{r-(i+1)} = \tau.
    \]
    Hence, by induction,
    \[
        \tau = \left( \sum_{k=1}^{r} \mathbf{s}_k - \alpha \right)/r = \left( \sum_{k=1}^{r-(j-1)} \mathbf{s}_k - \alpha \right)/(r-(j-1)) = \mathbf{s}_{r-(j-1)}, 
    \]
    and as a final step, 
    \[
        \tau = \left( \sum_{k=1}^{r-(j-1)} \mathbf{s}_k - \alpha \right)/(r-(j-1)) = \mathbf{s}_{r-(j-1)} \Leftrightarrow  \left( \sum_{k=1}^{r-j} \mathbf{s}_k - \alpha \right)/(r-j) = \mathbf{s}_{r-(j-1)} = \tau.
    \]
\end{proof}

\begin{proof}[Proof of Theorem \ref{thm:same_t}]  
  We will prove the result using two cases.  \newline 
  Case 1: Suppose $\mathbf{s}_{i+1} < \tau < \mathbf{s}_{i}$ for some $i \in [m]$ as found by Algorithm \ref{alg:base_min}.  Then, $\tau = (\sum_{k=1}^{i} \mathbf{s}_k - \alpha)/i$.  Hence $i^* \geq i$ for Algorithm \ref{alg:simp}.  We will show by induction that for all $j \in [m]$ such that $i < j \leq m$, $(\sum_{k=1}^{j} \mathbf{s}_k - \alpha)/j \geq \mathbf{s}_{j}$ and therefore $i^* = i$ and thus $\tau = \tau_{\text{simplex}}$.  For $j = i+1$, we have
    \[
         \left( \sum_{k=1}^{i} \mathbf{s}_k - \alpha \right)/i > \mathbf{s}_{i+1}  \Leftrightarrow  \left( \sum_{k=1}^{i+1} \mathbf{s}_k - \alpha \right) /(i+1) > \mathbf{s}_{i+1}.   
    \]
    Next, assume that $(\sum_{k=1}^{j} \mathbf{s}_k - \alpha)/j \geq \mathbf{s}_{j}$ for some $j \in [m]$ such that $i+1 \leq j \leq m$.  By the same argument above and the fact that $\mathbf{s}_j \geq \mathbf{s}_{j+1}$, we have $(\sum_{k=1}^{j+1} \mathbf{s}_k - \alpha)/(j+1) \geq \mathbf{s}_{j+1}$, and the result is proven. \newline 
    Case 2: Suppose $\tau = \mathbf{s}_i$ for some $i \in [m]$ as found by Algorithm \ref{alg:base_min}.  Then $\mathbf{s}_{i+1} < \tau \leq \mathbf{s}_{i}$ and $\tau = (\sum_{k=1}^{i} \mathbf{s}_k - \alpha)/i = \mathbf{s}_i$.  Let $\hat{i} < i$ be the largest index such that $\mathbf{s}_{\hat{i}} > \mathbf{s}_i$.  Then by Lemma \ref{lem:repeat_t}, 
  \[
    \mathbf{s}_{\hat{i}} > \mathbf{s}_i = \tau = \left( \sum_{k=1}^{i} \mathbf{s}_k - \alpha \right)/i = \left( \sum_{k=1}^{\hat{i}} \mathbf{s}_k - \alpha \right)/\hat{i}.
  \]
    By an argument similar to that of Case 1, for all $j \in [m]$ such that $\hat{i} < j \leq m$, $(\sum_{k=1}^{j} \mathbf{s}_k - \alpha)/j \geq \mathbf{s}_{j}$
  and thus $i^* = \hat{i}$ for Algorithm \ref{alg:simp} and $\tau = \tau_{\text{simplex}}$.  
\end{proof}

\section{Proof of Theorem \ref{thm:tau_linop}}
\label{app:tau_linop_pf}

\tauLinOp*

\begin{proof}
    Let $\widehat{\mathbf{x}} = |\mathbf{x}|/\alpha + \mu$ and $\widehat{\mathbf{s}} = \mathbf{s}/\alpha + \mu$, where $\mathbf{s}$ is a permutation of $|\mathbf{x}|$ such that $\mathbf{s}_1 \geq \mathbf{s}_2 \geq \cdots \geq \mathbf{s}_m \geq 0$ and $\mathbf{s}_{m+1} = 0.$  By Algorithm \ref{alg:base_min} there exists an $i$ such that $\mathbf{s}_{i+1} < \tau \leq \mathbf{s}_{i}$, where $\tau = (\sum_{k=1}^{i}\mathbf{s}_k - \alpha)/i$.  Since 
    \begin{equation*}
        \begin{split}
            \mathbf{s}_{i+1} < \tau \leq \mathbf{s}_{i} & \Leftrightarrow
            \mathbf{s}_{i+1}/\alpha + \mu < \tau/\alpha + \mu \leq \mathbf{s}_{i}/\alpha + \mu \\
            & \Leftrightarrow \widehat{\mathbf{s}}_{i+1} < \tau/\alpha + \mu \leq \widehat{\mathbf{s}}_{i} \\   
        \end{split} 
    \end{equation*}
    and $\tau/\alpha + \mu = (\sum_{k=1}^{i}\widehat{\mathbf{s}}_k - 1)/i$, it is clear that $|| \textbf{prox}_{\, ||\cdot||_\infty}(|\mathbf{x}|/\alpha + \mu) ||_\infty = \tau/\alpha + \mu$.  
\end{proof}

\section{Proof of Theorem \ref{thm:tau_cont}}
\label{app:tau_cont_pf}

\tauCont*

\begin{proof}  
    Let $\widehat{\mathbf{x}} \in \mathbb{R}^m$ be chosen arbitrarily and let  $\widehat{\mathbf{s}}$ be a permutation of $|\widehat{\mathbf{x}}|$ such that $\widehat{\mathbf{s}}_1 \geq \widehat{\mathbf{s}}_2 \geq \cdots \geq \widehat{\mathbf{s}}_m \geq 0$ and $\widehat{\mathbf{s}}_{m+1} = 0$, and $\widehat{\tau} = \tau(\widehat{\mathbf{s}})$. Fix $\epsilon>0$, and let $\mathbf{s}$ be the vector $\widehat{\mathbf{s}}$ with small perturbations in its elements, i.e., $\forall j \in [m]$, $|\mathbf{s}_j - \widehat{\mathbf{s}}_j| < \delta$. We shall find $\delta=\delta(\epsilon)>0$ so that $|\tau(\mathbf{s})-\tau(\widehat{\mathbf{s}})|<\epsilon$. We prove the result in two cases. \newline
    Case 1: $\widehat{\mathbf{s}}_{i+1} < \widehat{\tau} < \widehat{\mathbf{s}}_{i}$ for some $i \in [m]$. 
    Choose $\delta = \min(\epsilon, \frac{1}{2}(\widehat{\tau}-\widehat{\mathbf{s}}_{i+1}), \frac{1}{2}(\widehat{\mathbf{s}}_{i} - \widehat{\tau}))$.  Then $ \max(\mathbf{s}_{i+1}, \widehat{\mathbf{s}}_{i+1}) < \tau(\mathbf{s}) < \min(\mathbf{s}_i, \widehat{\mathbf{s}}_{i})$, and thus,
    \[
        | \tau(\mathbf{s}) - \tau(\widehat{\mathbf{s}}) | = 
        \frac{1}{i} \left|  \sum_{k=1}^{i}\mathbf{s}_k - \alpha - \sum_{k=1}^{i}\widehat{\mathbf{s}}_k + \alpha \right| \leq \frac{1}{i}\sum_{k=1}^{i} |\mathbf{s}_k - \widehat{\mathbf{s}}_k| < \delta \leq \epsilon.
    \]

    \noindent Case 2: $\widehat{\tau} = \widehat{\mathbf{s}}_{i}$ for some $i \in [m]$. 
    
    Subcase 2.1: $ \widehat{\mathbf{s}}_{i+r+1} < \widehat{\mathbf{s}}_{i+r}=\cdots = \widehat{\mathbf{s}}_{i+1}=\widehat{\mathbf{s}}_i=\widehat{\tau} < \widehat{\mathbf{s}}_{i-1}$.  Let $\delta_{\text{min}} = \min (\widehat{\mathbf{s}}_{i-1} - \widehat{\mathbf{s}}_{i},\widehat{\mathbf{s}}_{i+r} - \widehat{\mathbf{s}}_{i+r+1})$ and choose $\delta = \min(\epsilon, \frac{1}{5}\delta_{\text{min}})$. By Theorem \ref{thm:same_t}, we can consider the index $i^*$ in Algorithm \ref{alg:simp} associated to vector $\mathbf{s}$, the perturbation of $\widehat{\mathbf{s}}$. We claim that $i-1\leq i^*\leq i+r$. By Lemma \ref{lem:repeat_t}, 
\[ 
\widehat{\mathbf{s}}_{i+r} = \widehat{\mathbf{s}}_{i} = \widehat{\tau} = \frac{1}{i+r}\left( \sum_{k=1}^{i+r} \widehat{\mathbf{s}}_k - \alpha \right) = 
\frac{1}{i-1}\left( \sum_{k=1}^{i-1} \widehat{\mathbf{s}}_k - \alpha \right). 
\] 
Hence,
\begin{equation*}
    \begin{split}
    \mathbf{s}_{i-1}-\frac{1}{i-1}\left( \sum_{k=1}^{i-1} \mathbf{s}_k -\alpha \right) & = 
    \widehat{\mathbf{s}}_{i-1}-\frac{1}{i-1}\left( \sum_{k=1}^{i-1} \widehat{\mathbf{s}}_k -\alpha \right) + (\mathbf{s}_{i-1}- \widehat{\mathbf{s}}_{i-1}) - \frac{1}{i-1}\sum_{k=1}^{i-1} (\mathbf{s}_k - \widehat{\mathbf{s}}_k) \\    
    & \geq \widehat{\mathbf{s}}_{i-1} - \widehat{\mathbf{s}}_{i} - 2\delta \\ & \geq \delta_{\text{min}}-2\delta \\
    & > 0. 
    \end{split}
\end{equation*}
Thus, by the condition on line 6 of Algorithm \ref{alg:simp}, $i^*\geq i-1$. 

Similarly,
\begin{equation*}
    \begin{split}
        \mathbf{s}_{i+r+1}-\frac{1}{i+r+1}\left( \sum_{k=1}^{i+r+1} \mathbf{s}_k -\alpha \right) & = \\
    \widehat{\mathbf{s}}_{i+r+1} - \frac{1}{i+r+1}\left( \sum_{k=1}^{i+r+1} \widehat{\mathbf{s}}_k -\alpha \right) & + (\mathbf{s}_{i+r+1}- \widehat{\mathbf{s}}_{i+r+1}) - 
    \frac{1}{i+r+1}\sum_{k=1}^{i+r+1} (\mathbf{s}_k - \widehat{\mathbf{s}}_k) \\
    & \leq \frac{i+r}{i+r+1}\widehat{\mathbf{s}}_{i+r+1} -\frac{i+r}{i+r+1}\widehat{\mathbf{s}}_{i+r} +2\delta \\
    & \leq -\frac{i+r}{i+r+1}\delta_{\text{min}}+2\delta \\
    & \leq -\frac{1}{2}\delta_{\text{min}}+2\delta \\
    & < 0,
    \end{split}
\end{equation*}
and thus $i^*<i+r+1$.  Consequently,
\begin{equation}
\label{eqn:tau_cont3}
\tau(\mathbf{s}) =  \frac{1}{i-1+p}\left( \sum_{k=1}^{i-1+p} \mathbf{s}_k - \alpha \right)  
\end{equation}
for some $p\in \{0,1,2,\ldots, r+1\}$.  Hence, by Lemma \ref{lem:repeat_t},
\begin{equation}
\label{eqn:tau_cont4}
| \tau(\mathbf{s}) - \tau(\widehat{\mathbf{s}}) | \leq 
\frac{1}{i-1+p} \sum_{k=1}^{i-1+p} |\mathbf{s}_k - \widehat{\mathbf{s}}_k| < \delta \leq \epsilon.
\end{equation}

Subcase 2.2: $ \widehat{\mathbf{s}}_{2+r} < \widehat{\mathbf{s}}_{1+r}=\cdots = \widehat{\mathbf{s}}_{2}=\widehat{\mathbf{s}}_1=\widehat{\tau}$.   Let $\delta_{\text{min}} = \widehat{\mathbf{s}}_{1+r} - \widehat{\mathbf{s}}_{2+r}$ and choose $\delta = \min(\epsilon, \frac{1}{5}\delta_{\text{min}})$.  Again we consider the index $i^*$ in Algorithm \ref{alg:simp} associated to vector $\mathbf{s}$. We claim that $1 \leq i^*\leq 1+r$.  The logic of Subcase 2.1 applies for $i=1$ except for 1) establishing the lower bound on $i^*$, which is true by Theorem \ref{thm:psi}, and 2) Equations \ref{eqn:tau_cont3} and \ref{eqn:tau_cont4} apply for $p \in [r+1]$.  

Subcase 2.3: $ 0 = \widehat{\mathbf{s}}_{m+1} = \widehat{\mathbf{s}}_{m}=\cdots = \widehat{\mathbf{s}}_{i}=\widehat{\tau} < \widehat{\mathbf{s}}_{i-1}$.  Let $\delta_{\text{min}} = \widehat{\mathbf{s}}_{i-1} - \widehat{\mathbf{s}}_{i}$ and choose $\delta = \min(\epsilon, \frac{1}{3}\delta_{\text{min}})$.  We claim that $i-1 \leq i^* \leq m$ for the index $i^*$ in Algorithm \ref{alg:simp} associated to vector $\mathbf{s}$.  The logic of Subcase 2.1 applies for $r=m-i$ except for establishing the upper bound on $i^*$, which is trivially true.

Subcase 2.4:  $ 0 = \widehat{\mathbf{s}}_{m+1} = \widehat{\mathbf{s}}_{m}=\cdots = \widehat{\mathbf{s}}_{1}=\widehat{\tau}$, i.e. $\widehat{\mathbf{s}} = \mathbf{0}$.  Choose $\delta = \min(\epsilon, \frac{\alpha}{m})$.  Then $|| \mathbf{s} ||_1 \leq \alpha$, and by Theorem \ref{thm:psi}, $\tau(\mathbf{s}) = 0$.  Thus,
\[
| \tau(\mathbf{s}) - \tau(\widehat{\mathbf{s}}) | = 0 < \delta \leq \epsilon.
\]

\noindent Since $\widehat{\mathbf{x}}$ was chosen arbitrarily in each case, $\tau(|\mathbf{x}|)$ is continuous.     
\end{proof}

\section{Proof of Theorem \ref{thm:zero_pad}}
\label{app:zero_pad_pf}

\zeroPad*

\begin{proof} 
    Since $\mathbf{x}_{\text{zero-pad}} = (\mathbf{x}_1, \mathbf{x}_2, ..., \mathbf{x}_m, 0, ..., 0)$, the computations in each iteration of Algorithm \ref{alg:base_min} will be the same for $\mathbf{x}_{\text{zero-pad}}$ as they are for $\mathbf{x}$ until the algorithm terminates. Thus, $\tau_{\text{zero-pad}} = || \textbf{prox}_{\alpha ||\cdot||_\infty}(\mathbf{x}_{\text{zero-pad}}) ||_\infty = \tau$. 
\end{proof}

\section{Data Dependent Network Performance}
\label{app:dens}

In Section \ref{sec:nn} we demonstrated that performance of the network is highly dependent on the vector data distribution.  Specifically, uniformly distributed $\mathcal{U}(0,1)$ vector data performed better than Gaussian $\mathcal{N}(0,1)$ vector data.  For vectors of length 1,000-2,000, the $\tau$ testing error for $\mathcal{U}(0,1)$ data was two orders of magnitude lower than that for $\mathcal{N}(0,1)$ data ($10^{-6}$ vs. $10^{-4}$), and for vectors of length 1,000-100,000 the difference was four orders of magnitude ($10^{-7}$ vs. $10^{-3}$).  To explore why $\mathcal{U}(0,1)$ vector data performed better than $\mathcal{N}(0,1)$ vector data, we investigated network performance with uniformly distributed $\mathcal{U}(0,a)$ vector data for $a = 10$ and $a=20$, and compared with that of $\mathcal{N}(0,1)$ vector data. 

We performed four additional experiments, which we denote D1-D4.  These experiments were structured exactly as experiments 1-6, with the only difference being the datasets used.  Dataset information is given in Table \ref{tab:dens_data}.  

\begin{table}[h]
    \caption{Summary of vector data for experiments D1-D4}
    \label{tab:dens_data}
    \begin{tabular}{cccl}
        \toprule
        \textbf{Ex} & \textbf{$\mathcal{U}(0,10)$ vectors} & \textbf{$\mathcal{U}(0,20)$ vectors} & \textbf{Vector Lengths}  \\
        \midrule
        D1 & 10,000 & 0 & 1,000 - 2,000  \\
        D2 & 10,000 & 0 & 1,000 - 100,000  \\
        D3 & 0 & 10,000 & 1,000 - 2,000  \\
        D4 & 0 & 10,000 & 1,000 - 100,000  \\
        \botrule
    \end{tabular}
\end{table}
 
Learning curves for experiments D1-D4 are provided in Figure \ref{fig:density_lc}, including four plots that were previously provided in Section \ref{sec:nn}: 1) the learning curves for $\mathcal{U}(0,1)$ vector data, and 2) the learning curves for $\mathcal{N}(0,1)$ vector data.  In Figure \ref{fig:density_lc} we see that for uniformly distributed vector data, performance decreases as variance increases.  For vectors of length 1,000-2,000, performance is similar for $\mathcal{U}(0,20)$ and $\mathcal{N}(0,1)$ data.   However, for vectors of length 1,000-100,000, the $\tau$ testing error for $\mathcal{U}(0,20)$ data is lower than that for $\mathcal{N}(0,1)$ data by two orders of magnitude ($10^{-5}$ vs. $10^{-3})$.   

\begin{sidewaysfigure}
    \centering
    \includegraphics[scale=0.5]{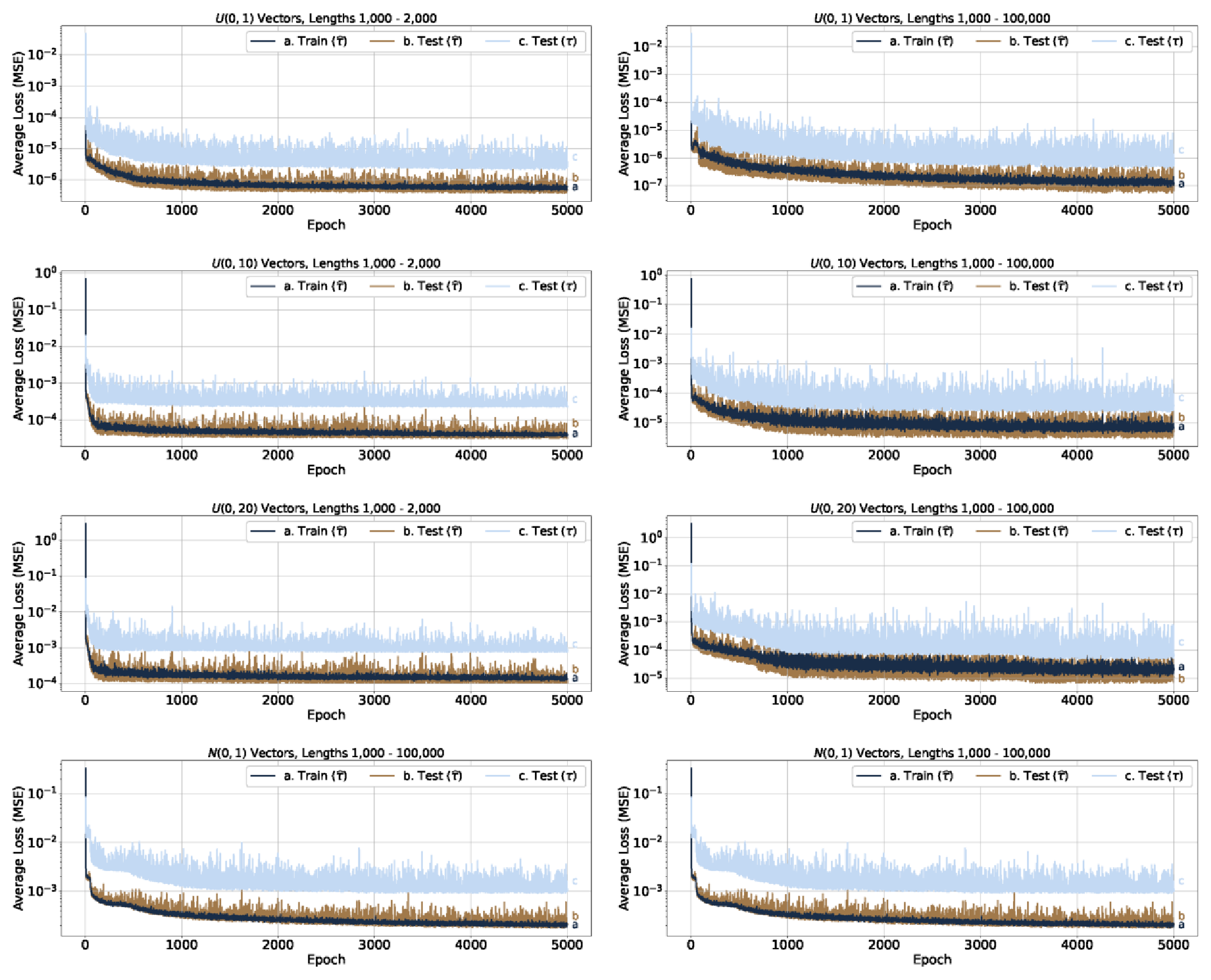}
    \caption{Neural network learning curves for $\mathcal{U}(0,1)$ vector data (experiments 3-4),  $\mathcal{U}(0,10)$ vector data (experiments D1-D2),  $\mathcal{U}(0,20)$ vector data (experiments D3-D4), and $\mathcal{N}(0,1)$ vector data (experiments 1-2)}
    \label{fig:density_lc}
\end{sidewaysfigure}

In Figure \ref{fig:density_fi} we provide the feature importances as given by saliency for the best models from experiments D1-D4, along with those from Section \ref{sec:nn} for experiments on $\mathcal{U}(0,1)$ and $\mathcal{N}(0,1)$ vector data for comparison.  For all uniformly distributed vector data experiments except that on $\mathcal{U}(0,1)$ vectors of lengths 1,000-2,000, the maximum is by far the most important feature.  For $\mathcal{N}(0,1)$ vector data, the maximum is relatively important for vectors of lengths 1,000-2,000 and relatively unimportant for vectors of lengths 1,000-100,000.  Since the computation of $\tau$ depends on the largest $i$ elements of $|\mathbf{x}|$ (Algorithm \ref{alg:base_min}), we hypothesize based on these numerical results that the performance of the network for a given $\mathbf{x}$ could be related to the density of elements in $|\mathbf{x}|$ near the maximum of $|\mathbf{x}|$.  However, more investigation is needed and is a potential area for future work.

\begin{figure}[H]
    \centering
    \includegraphics[width=1.0\textwidth]{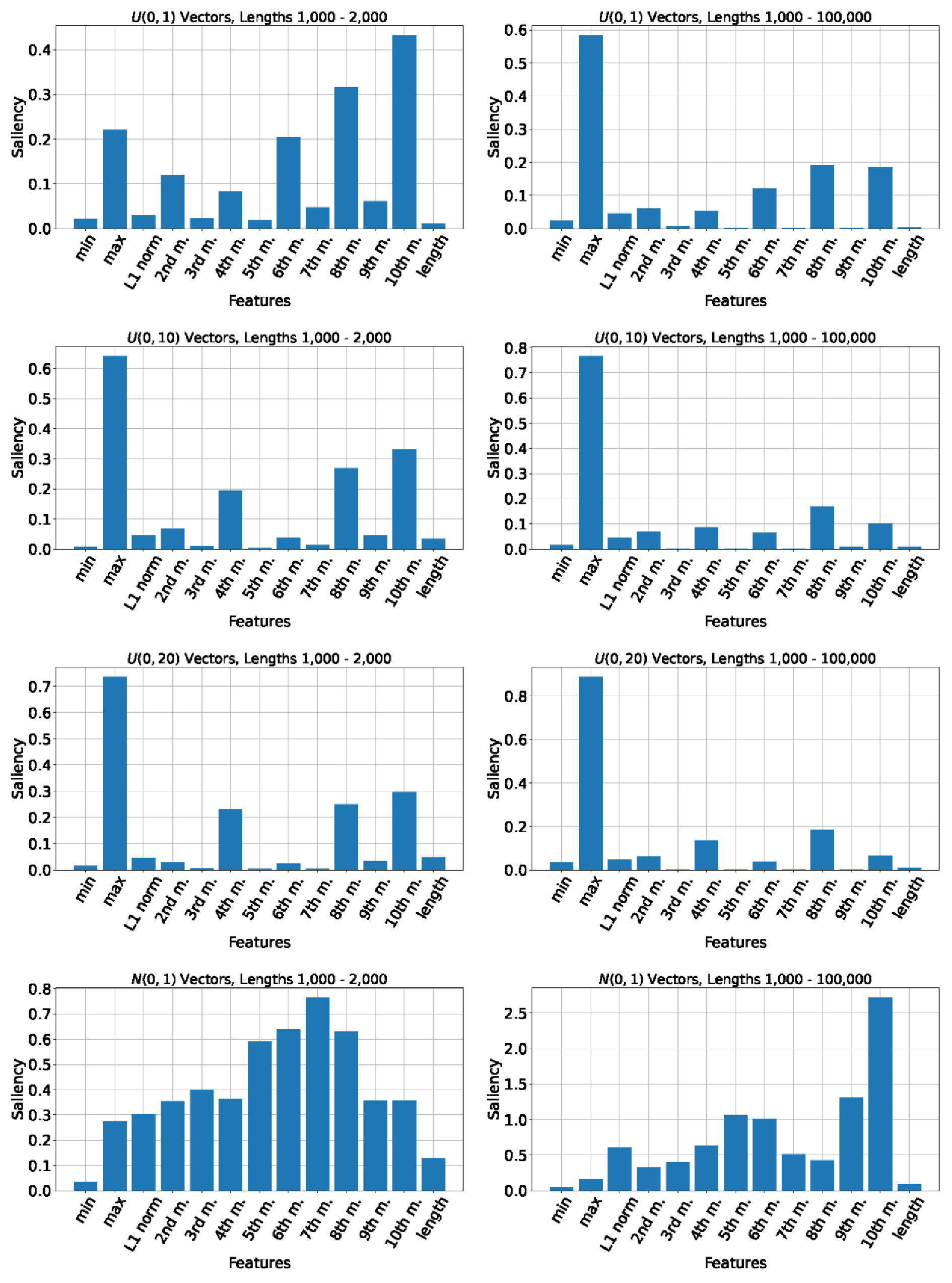}
    \caption{Feature importances for the best models from experiments 3-4, D1-D4, and 1-2}
    \label{fig:density_fi}
\end{figure}

\end{appendices}

\bibliography{refs}

\end{document}